\documentclass{article}


\usepackage[left=1.2in,right=1.2in,top=1.5in,bottom=2in]{geometry}
\usepackage[english]{babel}
\usepackage{authblk}
\usepackage{amsfonts}
\usepackage{amsthm}
\usepackage{amsmath}
\usepackage{amssymb}
\usepackage{amsopn}
\usepackage{url}			
\usepackage{cite}		
\usepackage{subcaption}

\usepackage{booktabs}	
\usepackage{csvsimple}	
\usepackage{longtable}	
\usepackage{siunitx}	
\usepackage{etoolbox}	
\usepackage{multirow}   

\usepackage{algorithm}
\usepackage{algorithmic}

\usepackage{listings}
\usepackage[dvipsnames]{xcolor}

\lstdefinestyle{mystyle}{,   
    commentstyle=\color{OliveGreen},
    keywordstyle=\color{Blue},
    stringstyle=\color{BrickRed},
    basicstyle=\ttfamily\footnotesize,
    breakatwhitespace=false,         
    breaklines=true,        
    keepspaces=true,                 
    numbers=left,                    
    numbersep=10pt,                  
    showspaces=false,                
    showstringspaces=false,
    showtabs=false,                  
    tabsize=4
}

\lstset{style=mystyle}

\usepackage{todonotes}

\newcommand{\R}{\mathbb{R}}

\newcommand{\norm}[1]{\left\lVert#1\right\rVert}
\newcommand{\normInf}[1]{\left\lVert#1\right\rVert_{\infty}}
\newcommand{\sqnorm}[1]{\left\lVert#1\right\rVert^{2}}

\newcommand{\drho}{\rho_{d}}
\newcommand{\prho}{\rho_{p}}
\newcommand{\grho}{\rho_{g}}

\newcommand{\HSD}{(HSD)}
\newcommand{\rHSD}{(rHSD)}

\newtheorem{theorem}{Theorem}

\newcommand{\revision}[2]{#2}

\title{\revision{blue}{Design and implementation of a modular\\ interior-point solver for linear optimization}}

\author[1]{Miguel F. Anjos}
\author[2]{Andrea Lodi}
\author[2]{Mathieu Tanneau\thanks{Mathieu Tanneau was supported by an excellence doctoral scholarship from FQRNT.}%
}
\affil[1]{School of Mathematics, University of Edinburgh}
\affil[2]{CERC Data Science, Polytechnique Montréal}

\date{}

\begin{document}

\maketitle

\begin{abstract}
 	This paper introduces the algorithmic design and implementation of Tulip, an open-source interior-point solver for linear optimization.
 	It implements \revision{red}{a regularized} homogeneous interior-point algorithm with multiple centrality corrections, and therefore handles unbounded and infeasible problems.
 	\revision{blue}{The solver is written in Julia, thus allowing for a flexible and efficient implementation:} Tulip's algorithmic framework is fully disentangled from linear algebra implementations \revision{blue}{and from a model's arithmetic}.
 	\revision{blue}{In particular}, this allows to seamlessly integrate specialized routines for structured problems.
 	Extensive computational results are reported.
 	We find that Tulip is competitive with open-source interior-point solvers on the \revision{blue}{H. Mittelmann's benchmark of barrier linear programming solvers}.
 	Furthermore, \revision{blue}{we design specialized linear algebra routines for structured master problems in the context of Dantzig-Wolfe decomposition.
 	These routines yield a tenfold speedup on large and dense instances that arise in power systems operation and two-stage stochastic programming, thereby outperforming state-of-the-art commercial interior point method solvers.
 	Finally, we illustrate Tulip's ability to use different levels of arithmetic precision by solving problems in extended precision.}
	
\end{abstract}

\section{Introduction}
\label{sec:intro}
	
	Linear programming (LP) algorithms have been around for over 70 years, and LP remains a fundamental paradigm in optimization.
	Indeed, although nowadays most real-life applications involve discrete decisions or non-linearities, the methods employed to solve them often rely on LP as their workhorse.
	Besides algorithms for mixed-integer linear programming (MILP), these include cutting-plane and outer-approximation algorithms that substitute a non-linear problem with a sequence of iteratively refined LPs \cite{Kelley1960, Westerlund1995, Mitchell2009}.
	Furthermore, LP is at the heart of classical decomposition methods such as Dantzig-Wolfe and Benders decompositions \cite{Dantzig1960, Benders1962}.
	Therefore, efficient and robust LP technology is instrumental to our ability to solve more involved optimization problems.

	Over the past few decades, interior-point methods (IPMs) have become a standard and efficient tool for solving LPs \cite{Wright1997, Gondzio2012_IPMreview}.
	While IPMs tend to overcome Dantzig's simplex algorithm on large-scale problems, the latter is well-suited for solving sequences of closely related LPs, by taking advantage of an advanced basis.
	Nevertheless, beyond sheer performance, it is now well recognized that a number of LP-based algorithms can further benefit from IPMs, despite their limited ability to warm start.
	In cutting plane algorithms, stronger cuts are often obtained by cutting off an interior point rather than an extreme vertex \cite{Bixby1992, Mitchell2000, Mitchell2009}.
	Similarly,
	\revision{blue}{IPMs have been successfully employed in the context of decomposition methods \cite{Elhedhli2004,Babonneau2009,NaoumSawaya2013,Munari2013,Gondzio2016_PDCGMlarge}, wherein}
	well-centered interior solutions typically provide a stabilization effect \cite{Gondzio1996_PDCGM, Rousseau2007,Gondzio2013_PDCGMnew}, thus reducing tailing-off and improving convergence.
	
\revision{red}{
	\subsection{Exploiting structure in IPMs}
}

	The remarkable performance of IPMs stems from both strong algorithmic foundations and efficient linear algebra.
	Indeed, the main computational effort of IPMs resides in the resolution, at each iteration, of a system of linear equations.
	Therefore, the efficiency of the underlying linear algebra has a direct impact of the method's overall performance.
	\revision{red}{Remarkably}, while most IPM solvers employ general-purpose sparse linear algebra routines, substantial speedups can be obtained by exploiting a problem's specific structure.
	\revision{red}{Nevertheless, successfully doing so requires (i) identifying a problem's structure and associated specialized linear algebra, (ii) integrating these custom routines within an IPM solver, and (iii) having a convenient and flexible  way for the user to convey structural information to the solver.
	The main contribution of our work is to simplify the latter two points.
	}
	
	\revision{red}{Numerous works have studied structure-exploiting IPMs, e.g., \cite{Birge1988,Hurd1988,Schultz1991,Choi1993,Jessup1994,Gondzio1997_GUB,Gondzio2003_IPMparLP,Gondzio2007_IPMparQP,Castro2017}.}
	For instance, block-angular matrices typically arise in stochastic programming when using scenario decomposition.
	In \cite{Birge1988} and later in \cite{Jessup1994}, the authors thus design specialized factorization techniques that outperform generic implementations.
	\revision{red}{Schultz et al. \cite{Schultz1991} design a specialized IPM for block-angular problems; therein, linking constraints are handled separately, thus allowing to decompose the rest of the problem.}
	Gondzio \cite{Gondzio1997_GUB} observed that the master problem in Dantzig-Wolfe decomposition possesses a block-angular structure.
	Similar approaches have been explored for network flow problems \cite{Choi1993}, \revision{red}{multi-commodity flow problems \cite{Gondzio2003_IPMparLP}, asset management problems \cite{Gondzio2007_IPMparQP}}, and for solving facility location problems \cite{Hurd1988,Castro2017}.
	
	The aforementioned works focus on devising specialized linear algebra for a particular structure or application.
	On the other hand, a handful of IPM codes that accommodate various linear algebra implementations have been developed.
	The OOQP software, developed by Gertz and Wright \cite{Gertz2003}, uses object-oriented design so that data structures and linear algebra routines can be tailored to specific applications.
	Motivated by large-scale stochastic programming, PIPS \cite{Lubin2011} incorporates a large share of OOQP's codebase, alongside specialized linear solvers for block-angular matrices.
	Nevertheless, to the best of the authors' knowledge, OOQP is no longer actively maintained, while current development on PIPS focuses on non-linear programming.\footnote{Personal communication with PIPS developers.}
	In a similar fashion, OOPS \cite{Gondzio2003_IPMparLP,Gondzio2007_IPMparQP,Gondzio2009_OOPS} implements custom linear algebra that can exploit arbitrary block matrix structures.
	We also note that both PIPS and OOPS are primarily intended for massive parallelism on high-performance computing infrastructure.
    \revision{red}{Furthermore, the BlockIP software \cite{Castro2016} is designed for block-angular convex optimization problems, and solves linear systems with a combination of Cholesky factorization and preconditioned conjugate gradient.
    Both OOPS and BlockIP can be accessed through SML \cite{Grothey2009} --which requires AMPL,}
    and are distributed under a closed-source proprietary license.
    
    \revision{blue}{
    Finally, while nowadays most optimization solvers are written in C or C++, users are increasingly turning to higher-level programming languages such as Python, Matlab or Julia, alongside a variety of modeling tools, e.g, Pyomo \cite{hart2011pyomo}, CVXPY \cite{diamond2016cvxpy}, YALMIP \cite{Lofberg2004}, JuMP \cite{Dunning2017_JuMP}, to mention a few.
    Thus, users of high-level languages often have to switch to a low-level language in order to implement performance-critical tasks such as linear algebra.
    This situation, commonly referred to as the ``two-language problem", hinders code development, maintenance, and usability.
    }\\

\subsection{Contributions and outline}
\label{sec:intro:subsec:outline}

    In this paper, we describe \revision{blue}{the design and implementation of a modular interior-point solver, Tulip.
    The solver is written in Julia \cite{Bezanson2017}, which offers several advantages.
    First, Julia combines both high-level syntax and fast performance, thus addressing the two-language problem.
    In particular, it offers built-in support for linear algebra, with direct access to dense and sparse linear algebra libraries such as BLAS, LAPACK and SuiteSparse \cite{SuiteSparse}.
    Second, the Julia ecosystem for optimization comprises a broad range of tools, from solvers' wrappers to modeling languages, alongside a growing and dynamic community of users.
    Finally, Julia's multiple dispatch feature renders Tulip's design fully flexible,}
    thus allowing to disentangle the IPM algorithmic framework from linear algebra implementations, \revision{blue}{and to solve problems in arbitrary precision arithmetic.}

	The remainder of the paper is structured as follows.	
	In Section \ref{sec:notations}, we introduce some notations and relevant definitions.
	
	In Section \ref{sec:ipm}, we describe the homogeneous self-dual embedding, and Tulip's \revision{red}{regularized} homogeneous interior-point algorithm.
	This feature contrasts with most IPM LP codes, namely, those that implement the almost-ubiquitous infeasible primal-dual interior-point algorithm \cite{Mehrotra1992}.
	The main advantage of the homogeneous algorithm is its ability to return certificates of primal or dual infeasibility.
	It is therefore better suited for use within cutting-plane algorithms or decomposition methods, wherein one may encounter infeasible or unbounded LPs.
	
	\revision{red}{In Section \ref{sec:KKT}, we highlight the resolution of linear systems within Tulip, which builds on black-box linear solvers.
	This modular design leverages Julia's multiple dispatch, thereby facilitating the integration of custom linear algebra with no performance loss due to using external routines.}
	
	\revision{blue}{The presolve procedure is described in Section \ref{sec:presolve} and,}
	in Section \ref{sec:implementation}, we provide further implementation details of Tulip, such as the treatment of variable bounds, default values of parameters, and \revision{red}{default linear solvers}.
	Tulip is publicly available \cite{Tulip} under an open-source license.
	It can be used as a stand-alone package in Julia, and through the solver-independent \revision{blue}{interface \texttt{MathOptInterface} \cite{Legat2020_MOI}}.
	
	In Section \ref{sec:res}, we report on \revision{blue}{three} sets of computational experiments.
	First, we compare Tulip to several open-source and commercial IPM solvers on \revision{blue}{a benchmark set of unstructured LP instances}.
	We observe that, using generic sparse linear algebra, Tulip is competitive with open-source IPM solvers.
	Second, we \revision{blue}{demonstrate Tulip's flexible design.
	We consider block-angular problems with dense linking constraints from two column-generation applications, for which we design specialized linear algebra routines.
    This implementation yields a tenfold speedup, thereby outperforming commercial solvers on large-scale instances.}
	\revision{blue}{Third, we show how extended precision can alleviate numerical difficulties, thus illustrating Tulip's ability to work in arbitrary precision arithmetic.}
	
	Finally, Section \ref{sec:conclusion} concludes the paper and highlights future research directions.

\section{Notations}
\label{sec:notations}

	We consider LPs in primal-dual standard form
	\begin{align}
		\label{eq:standard_LP}
		\begin{array}{rrl}
			(P) \ \ \ \displaystyle \min_{x} \ \ \ 
			& c^{T} x\\
			s.t. \ \ \ 
			& A x &= b,\\
			& x & \geq 0,
		\end{array}
		&
		\hspace*{1cm}
		\begin{array}{rrl}
			(D) \ \ \ \displaystyle \max_{y, s} \ \ \
			& b^{T} y \ \ \ \ \ \  \\
			s.t. \ \ 
			& A^{T}y + s &= c,\\
			& s & \geq 0,
		\end{array}
	\end{align}
	where $c, x, s \in \mathbb{R}^{n}$, $b, y \in \mathbb{R}^{m}$, and $A \in \mathbb{R}^{m \times n}$ is assumed to have full row rank.
	We follow the usual notations from interior-point literature, and write $X$ (resp. $S$) the diagonal matrix whose diagonal is given by $x$ (resp. $s$), i.e., $X := Diag(x)$ and $S := Diag(s)$.

	We denote $I$ the identity matrix and $e$ the vector with all coordinates equal to one; their respective dimensions are always obvious from context.
	The norm of a vector is written $\norm{\cdot}$ and, unless specified otherwise, it denotes the $\ell_{2}$ norm.
	
	A primal solution $x$ is feasible if $Ax = b$ and $x \geq 0$.
	A strictly feasible (or interior) solution is a primal feasible solution with $x > 0$.
	Similarly, a dual solution $(y, s)$ is feasible if $A^{T}y + s = c$ and $s \geq 0$, and strictly feasible if, additionally, $s > 0$.
	Finally, a primal-dual solution $(x, y, s)$ is optimal for \eqref{eq:standard_LP} if $x$ is primal-feasible, $(y,s)$ is dual-feasible, and their objective values are equal, i.e., $c^{T}x = b^{T}y$.
	
	A solution $(x, y, s)$ with $x, s \geq 0$ is strictly complementary if
	\begin{align}
		\label{eq:strcict_complementarity}
		\forall i \in \{1, \dots, n\}, \big( x_{i} s_{i} = 0 \text{ and } x_{i} + s_{i} > 0 \big).
	\end{align}
	The \revision{blue}{complementarity gap} is defined as $x^{T}s$.
	When $(x, y, s)$ is primal-dual feasible, the \revision{blue}{complementarity gap} equals the classical optimality gap, i.e., we have $x^{T}s = c^{T}x - b^{T}y$.
	
	For ease of reading, we assume, without loss of generality, that all primal variables are required to be non-negative.
	The handling of free variables and of variables with finite upper bound will be detailed in Section \ref{sec:implementation}.

\section{\revision{red}{Regularized homogeneous interior-point algorithm}}
\label{sec:ipm}

In this section, we describe the homogeneous self-dual formulation and algorithm.
Our implementation largely follows the algorithmic framework of \cite{Xu1996} and \cite{Andersen2000}, \revision{red}{combined with the primal-dual regularization scheme of \cite{Friedlander2012}}.
Consequently, we focus on the algorithm's main components, and refer to \revision{red}{\cite{Xu1996, Andersen2000,Friedlander2012}} for convergence proofs and theoretical results.
Specific implementation details will be further discussed in Section \ref{sec:implementation}.

\subsection{Homogeneous self-dual embedding}
\label{sec:ipm:hsd_model}

	The simplified homogeneous self-dual form was introduced in \cite{Xu1996}.
	It consists in reformulating the primal-dual pair \eqref{eq:standard_LP} as a single, self-dual linear program, which writes
	\begin{align}
        \label{eq:HSD:obj}
        (HSD) \ \ \ 
        \min_{x, y, \tau} \ \ \
        & 0 \\
        s.t. \ \ \ 
        \label{eq:HSD:dual}
        & -A^{T}y + c \tau \geq 0,\\
        \label{eq:HSD:primal}
        & Ax  - b \tau = 0,\\
        \label{eq:HSD:gap}
        & - c^{T}x + b^{T}y  \geq 0,\\
        \label{eq:HSD:domain}
        & x, \tau \geq 0,
    \end{align}
	where $\tau$ \revision{red}{is a scalar variable.
	Let $s$, $\kappa$ be the non-negative slacks associated to \eqref{eq:HSD:dual} and \eqref{eq:HSD:gap}, respectively.}
	A solution $(x, y, s, \tau, \kappa)$ is strictly complementary if
	\begin{align*}
		x_{i}s_{i} = 0, x_{i} + s_{i} > 0, \text{ and } \tau \kappa = 0, \tau + \kappa > 0.
	\end{align*}
	Problem $(HSD)$ is always feasible, has empty interior and, under mild assumptions, possesses a strictly complementary feasible solution \cite{Xu1996}.
	
	Let $(x^{*}, y^{*}, s^{*}, \tau^{*}, \kappa^{*})$ be a strictly complementary feasible solution for $(HSD)$.
	If $\tau^{*} >0$, then $(\frac{x^{*}}{\tau^{*}}, \frac{y^{*}}{\tau^{*}}, \frac{s^{*}}{\tau^{*}})$ is an optimal solution for the original problem \eqref{eq:standard_LP}.
	Otherwise, we have $\kappa^{*} >0$ and thus $c^{T}x^{*} - b^{T}y^{*} < 0$.
	In that case, the original problem (P) is infeasible or unbounded.
	If $c^{T}x^{*} < 0$, then (P) is unbounded and $x^{*}$ is an unbounded ray.
	If $-b^{T}y^{*} < 0$, then (P) is infeasible and $y^{*}$ is an unbounded dual ray.
	The latter is also referred to as a Farkas proof of infeasibility.
	Finally, if both $c^{T}x^{*} < 0$ and $-b^{T}y^{*} < 0$, then both (P) and (D) are infeasible.
	
\revision{red}{
\subsection{Regularized formulation}

Friedlander and Orban \cite{Friedlander2012} introduce an exact primal-dual regularization scheme for convex quadratic programs, which we extend to the HSD form.
The benefits of regularizations will be further detailed in Section \ref{sec:KKT}.
Importantly, rather than viewing $\HSD$ as a generic LP to which the regularization procedure of \cite{Friedlander2012} is applied, we exploit the fact that $\HSD$ is a self-dual embedding of $(P){-}(D)$, and formulate the regularization in the original primal-dual space.

Thus, we consider a \emph{single}, regularized, self-dual problem
\begin{align}
    \label{eq:rHSD:obj}
    (rHSD) \ \ \ 
    \min_{x, y, \tau} \ \ \
    & \prho (x - \bar{x})^{T}x + \drho (y - \bar{y})^{T}y + \grho (\tau - \bar{\tau}) \tau \\
    s.t. \ \ \ 
    \label{eq:rHSD:dual}
    & -A^{T}y + c \tau +  \prho (x - \bar{x}) \geq 0,\\
    \label{eq:rHSD:primal}
    & Ax  - b \tau + \drho (y - \bar{y}) = 0,\\
    \label{eq:rHSD:gap}
    & - c^{T}x + b^{T}y + \grho (\tau - \bar{\tau}) \geq 0,\\
    \label{eq:rHSD:domain}
    & x, \tau, \geq 0,
\end{align}
where $\rho_{p}, \rho_{d}, \rho_{g}$ are positive scalars, and $\bar{x} \in \R^{n}, \bar{y} \in \R^{m}, \bar{\tau} \in \R$ are given estimates of an optimal solution of $\HSD$.
We denote by $s, \kappa$ the non-negative slack variables of constraints \eqref{eq:rHSD:dual} and \eqref{eq:rHSD:gap}, respectively.
The first-order Karush-Kuhn-Tucker (KKT) conditions for $\rHSD$ can then be expressed in the following form:
\begin{align}
    \label{eq:KKT:rHSD:dual}
    \prho x - A^{T}y - s + c \tau &= \prho \bar{x},\\
    \label{eq:KKT:rHSD:primal}
    A x + \drho y - b \tau &= \drho \bar{y},\\
    \label{eq:KKT:rHSD:gap}
    -c^{T}x + b^{T}y + \grho \tau - \kappa &= \grho \bar{\tau},\\
    \label{eq:KKT:rHSD:xs}
    x_{j} s_{j} &= 0, & j=1, ..., n\\
    \label{eq:KKT:rHSD:tk}
    \tau \kappa &= 0,\\
    \label{eq:KKT:rHSD:domain}
    x, s, \tau, \kappa &\geq 0.
\end{align}

The correspondence between $\rHSD$ and \cite{Friedlander2012} follows from the fact that, up to a constant term, the objective function \eqref{eq:rHSD:obj} equals
\begin{align*}
    \frac{1}{2}
    \left(
        \prho \sqnorm{x - \bar{x}} + \drho \sqnorm{y - \bar{y}} + \grho \sqnorm{\tau - \bar{\tau}}
        + \prho \sqnorm{x} + \drho \sqnorm{y} + \grho \sqnorm{\tau}
    \right).
\end{align*}
Note that, for $\prho = \drho = \grho = 0$, the regularized problem $\rHSD$ reduces to $\HSD$.
Furthermore, Theorem \ref{thm:exact_reg} shows that, for positive $\prho, \drho, \grho$, the regularization is exact.

\begin{theorem}
\label{thm:exact_reg}
    Assume $\prho, \drho, \grho > 0$.
    Let $(x^{*}, y^{*}, \tau^{*})$ be a complementary optimal solution of $\HSD$, and let $(\bar{x}, \bar{y}, \bar{\tau}) = (x^{*}, y^{*}, \tau^{*})$ in the definition of $\rHSD$.
    Then, $(x^{*}, y^{*}, \tau^{*})$ is the unique optimal solution of $\rHSD$.
\end{theorem}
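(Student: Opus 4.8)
The plan is to view $\rHSD$ as a convex quadratic program with affine constraints and a \emph{strictly convex} objective --- which immediately forces uniqueness of the minimizer --- and then to exhibit $(x^{*}, y^{*}, \tau^{*})$ as that minimizer. Write $f(x, y, \tau)$ for the objective \eqref{eq:rHSD:obj}. Its quadratic part is $\prho \sqnorm{x} + \drho \sqnorm{y} + \grho \tau^{2}$, so $\nabla^{2} f = 2\,\diag(\prho I, \drho I, \grho)$, which is positive definite since $\prho, \drho, \grho > 0$; hence $f$ is strictly convex. As the feasible set of $\rHSD$ is a polyhedron, $f$ has at most one minimizer on it (two distinct minimizers would have a feasible midpoint of strictly smaller objective value, a contradiction).

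Next I would check that $z^{*} := (x^{*}, y^{*}, \tau^{*})$ is feasible for $\rHSD$ and minimizes $f$ over the feasible set. By hypothesis $(x^{*}, y^{*}, \tau^{*})$ is a complementary optimal solution of $\HSD$, and since $\HSD$ has identically zero objective this simply means $z^{*}$ satisfies \eqref{eq:HSD:dual}--\eqref{eq:HSD:domain}; writing $s^{*} := -A^{T} y^{*} + c\tau^{*} \geq 0$ and $\kappa^{*} := -c^{T} x^{*} + b^{T} y^{*} \geq 0$ for the associated slacks, we also have $x^{*}, \tau^{*} \geq 0$ and $x^{*}_{j} s^{*}_{j} = 0$, $\tau^{*}\kappa^{*} = 0$. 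With $(\bar{x}, \bar{y}, \bar{\tau}) = (x^{*}, y^{*}, \tau^{*})$, the regularization terms $\prho(x - \bar{x})$, $\drho(y - \bar{y})$, $\grho(\tau - \bar{\tau})$ vanish at $z^{*}$, so constraints \eqref{eq:rHSD:dual}--\eqref{eq:rHSD:domain} reduce there to \eqref{eq:HSD:dual}--\eqref{eq:HSD:domain}; thus $z^{*}$ is $\rHSD$-feasible. For optimality it suffices, by convexity of $f$, to verify $\langle \nabla f(z^{*}),\, z - z^{*} \rangle \geq 0$ for every $\rHSD$-feasible $z = (x, y, \tau)$. Here $\nabla f(z^{*}) = (\prho x^{*}, \drho y^{*}, \grho \tau^{*})$, and using the $\rHSD$ constraints to rewrite $\prho(x - x^{*})$, $\drho(y - y^{*})$, $\grho(\tau - \tau^{*})$ in terms of the slacks $s, \kappa \geq 0$ of $z$, together with $A x^{*} = b\tau^{*}$ and $A^{T} y^{*} = c\tau^{*} - s^{*}$, the cross terms cancel and the inner product collapses to $(x^{*})^{T} s + x^{T} s^{*} + \tau^{*}\kappa + \tau\kappa^{*} \geq 0$. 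Hence $z^{*}$ minimizes $f$ over the feasible polyhedron, and by strict convexity it is the unique optimal solution of $\rHSD$. (Equivalently, since the excerpt already records the first-order system \eqref{eq:KKT:rHSD:dual}--\eqref{eq:KKT:rHSD:domain}, one may instead substitute $z^{*}$ into it --- the regularization terms again cancel, leaving exactly $\HSD$-feasibility together with the complementarity of $z^{*}$ --- and invoke sufficiency of the KKT conditions for the convex program $\rHSD$.)

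I expect the only mildly delicate step to be that last telescoping identity: one must keep track of which equalities are $\HSD$ relations holding at $z^{*}$ and which are $\rHSD$ relations holding at the test point $z$, and observe that the surviving terms are inner products of nonnegative vectors. Note that this computation uses only feasibility of $z$ and $z^{*}$ and nonnegativity --- not complementarity --- so the complementarity hypothesis on the center is what guarantees that the recovered point is a genuine optimal solution of $\HSD$, rather than what drives the uniqueness statement itself. Everything else --- convexity and strict convexity of $f$, the midpoint argument for uniqueness, and the cancellation of the regularization terms at $z^{*}$ --- is routine.
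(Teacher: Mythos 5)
Your proof is correct, and its skeleton matches the paper's: uniqueness from strict convexity of the regularized objective, plus a verification that $(x^{*},y^{*},\tau^{*})$ is feasible and optimal for $\rHSD$. Where you diverge is in the optimality certificate. The paper substitutes the constraints \eqref{eq:rHSD:dual}--\eqref{eq:rHSD:gap} directly into the objective of an \emph{arbitrary} feasible point and finds that the objective equals $x^{T}s+\tau\kappa\geq 0$; since the centered point attains the value $(x^{*})^{T}s^{*}+\tau^{*}\kappa^{*}=0$ by complementarity, optimality is immediate. You instead verify the first-order variational inequality $\langle\nabla f(z^{*}),z-z^{*}\rangle\geq 0$, whose telescoping reduces to $(x^{*})^{T}s+x^{T}s^{*}+\tau^{*}\kappa+\tau\kappa^{*}\geq 0$; I checked the cancellations (using $Ax^{*}=b\tau^{*}$ and $A^{T}y^{*}=c\tau^{*}-s^{*}$) and they are right. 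The paper's identity is the cleaner of the two --- it additionally reveals that the optimal value of $\rHSD$ is zero --- while yours requires tracking cross terms between $z$ and $z^{*}$ but, as you note, never invokes complementarity of the center. That observation is accurate but less consequential than it may appear: for any $\HSD$-feasible point one has $x^{T}s+\tau\kappa = x^{T}(-A^{T}y+c\tau)+\tau(-c^{T}x+b^{T}y)=0$ by self-duality, so feasibility already forces complementarity; the hypothesis in the theorem statement is essentially free. Your parenthetical KKT route is also fine and amounts to the same computation.
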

\begin{proof}The uniqueness of the optimum is a direct consequence of $\rHSD$ being a convex problem with strictly convex objective.

Next, we show that any feasible solution of $\rHSD$ has non-negative objective.
Let $(x, y, s, \tau, \kappa)$ be a feasible solution of $\rHSD$.
Substituting Eq. \eqref{eq:rHSD:dual}-\eqref{eq:rHSD:gap} into the objective \eqref{eq:rHSD:obj}, one obtains
\begin{align*}
    Z &= \prho (x - \bar{x})^{T}x + \drho (y - \bar{y})^{T}y + \grho (\tau - \bar{\tau}) \tau\\
    & = (A^{T}y + s - c\tau)^{T}x  + (b \tau - Ax)^{T}y + (c^{T}x - b^{T}y + \kappa) \tau\\
    & = x^{T}s + \tau \kappa \geq 0.
\end{align*}
Then, $(x^{*}, y^{*}, \tau^{*})$ is trivially feasible for $\rHSD$, and its objective value is $(x^{*})^{T}s^{*} + \tau^{*} \kappa^{*} = 0$.
Thus, it is optimal for $\rHSD$, which concludes the proof.
\qed
\end{proof}
}
	
\subsection{\revision{red}{Regularized homogeneous algorithm}}
\label{sec:ipm:hsd_algo}

	We now describe the \revision{red}{regularized} homogeneous interior-point algorithm.
	\revision{red}{%
	    Similar to \cite{Friedlander2012}, we apply a single Newton iteration to a sequence of problems of the form $\rHSD$ where, at each iteration, $\bar{x}, \bar{y}, \bar{\tau}$ are chosen to be the current primal-dual iterate.
	}
	
	Let $(x, y, s, \tau, \kappa)$ denote the current primal-dual iterate, with $(x, s, \tau, \kappa) >0$, and define the residuals
	\begin{align}
		r_{p}  &= b \tau - Ax,\\
		r_{d}  &= c \tau - A^{T}y - s,\\
		r_{g}  &= c^{T}x - b^{T}y + \kappa,
	\end{align}
	and the barrier parameter
	\[
		\mu = \dfrac{x^{T}s + \tau \kappa}{n+1}.
	\]
	
	\revision{red}{For given $\bar{x}, \bar{y}, \bar{\tau}$,}
	a search direction $(\delta_{x}, \delta_{y}, \delta_{s}, \delta_{\tau}, \delta_{\kappa})$ is computed by solving a Newton system of the form
	\revision{red}{
	\begin{align}
        \label{eq:Newton:dual}
		-\prho \delta_{x} + A^{T} \delta_{y} + \delta_{s} - c \delta_{\tau}
		    &= \eta \left( c \tau - A^{T}y - s + \prho (\bar{x} - x) \right),\\
		\label{eq:Newton:primal}
		A \delta_{x} + \drho \delta_{y} - b \delta_{\tau}
		    &= \eta \left( b \tau - Ax - \drho(y - \bar{y}) \right),\\
		\label{eq:Newton:gap}
	    -c^{T} \delta_{x} + b^{T} \delta_{y} + \grho \delta_{\tau} - \delta_{\kappa}
	        &= \eta \left( c^{T}x - b^{T}y + \kappa - \grho (\tau - \bar{\tau}) \right),\\
	    \label{eq:Newton:xs}
	    S \delta_{x} + X \delta_{s}
	        &= -XSe + \gamma \mu e,\\
	    \label{eq:Newton:tk}
		\kappa \delta_{\tau} + \tau \delta_{\kappa}
		    &= -\tau \kappa + \gamma \mu,
	\end{align}}
	where $\gamma$ and $\eta$ are non-negative scalars \revision{red}{whose values will be specified in Section \ref{sec:ipm:algo:direction}}.
	\revision{red}{%
	We evaluate the Newton system at $(\bar{x}, \bar{y}, \bar{\tau}) = (x, y, \tau)$, which yields
	\begin{align}
	    \label{eq:Newton:reg}
	    \begin{bmatrix}
	        -\prho I && A^{T} && I && -c && 0\\
	        A && \drho I && 0 && -b && 0\\
	        -c^{T} && b^{T} && 0 && \grho && -1\\
	        S && 0 && X && 0 && 0\\
	        0 && 0 && 0 && \kappa && \tau
	    \end{bmatrix}
	    \begin{bmatrix}
	        \delta_{x}\\
	        \delta_{y}\\
	        \delta_{s}\\
	        \delta_{\tau}\\
	        \delta_{\kappa}
	    \end{bmatrix}
	    =
	    \begin{bmatrix}
	        \eta r_{d}\\
	        \eta r_{p}\\
	        \eta r_{g}\\
	        -XSe + \gamma \mu e\\
	        -\tau \kappa + \gamma \mu
	    \end{bmatrix}
	    .
	\end{align}
	System \eqref{eq:Newton:reg} is identical to the Newton system obtained when solving $\HSD$ (see, e.g., \cite{Andersen2000}), except for the regularization terms that appear in the left-hand side.
	In particular, the right-hand side remains unchanged.
	}

\subsubsection{Starting point}

	We choose the following default starting point
	\[
		(x^{0}, y^{0}, s^{0}, \tau^{0}, \kappa^{0}) = (e, 0, e, 1, 1).
	\]
	This initial point was proposed in \cite{Xu1996}.
	Besides its simplicity, it has well-balanced \revision{blue}{complementarity} products, which are all equal to one.

\subsubsection{Search direction}
\label{sec:ipm:algo:direction}

	At each iteration, a search direction is computed using Mehrotra's predictor-corrector technique \cite{Mehrotra1992}, combined with Gondzio's multiple centrality corrections \cite{Gondzio1996_correction}.
	Following \cite{Andersen2000}, we adapt the original formulas of \cite{Mehrotra1992, Gondzio1996_correction} to account for the homogeneous embedding.

	First, the affine-scaling direction $(\delta^{\text{aff}}_{x}, \delta^{\text{aff}}_{y}, \delta^{\text{aff}}_{s}, \delta^{\text{aff}}_{\tau}, \delta^{\text{aff}}_{\kappa})$ is obtained 	by solving the Newton system
	\begin{align}
		\label{eq:NS_aff_dual}
		\revision{red}{-\prho \delta^{\text{aff}}_{x}} + A^{T} \delta^{\text{aff}}_{y} + \delta^{\text{aff}}_{s} - c \delta^{\text{aff}}_{\tau} &= r_{d},\\
		\label{eq:NS_aff_primal}
		 A \delta^{\text{aff}}_{x} \revision{red}{+ \drho \delta^{\text{aff}}_{y}} -b \delta^{\text{aff}}_{\tau} &= r_{p},\\
		\label{eq:NS_aff_opt}
	    -c^{T} \delta^{\text{aff}}_{x} + b^{T} \delta^{\text{aff}}_{y} \revision{red}{+ \grho \delta^{\text{aff}}_{\tau}} -\delta^{\text{aff}}_{\kappa} &= r_{g},\\
	    \label{eq:NS_aff_comp_xs}
	    S \delta^{\text{aff}}_{x} + X \delta^{\text{aff}}_{s} &= -XSe,\\
	    \label{eq:NS_aff_comp_tk}
		\kappa \delta^{\text{aff}}_{\tau} + \tau \delta^{\text{aff}}_{\kappa} &= -\tau \kappa,
	\end{align}
	which corresponds to \revision{red}{\eqref{eq:Newton:reg} with} $\eta=1$ and $\gamma=0$.
	Taking a full step ($\alpha=1$) would thus reduce both infeasibility and \revision{blue}{complementarity} gap to zero.
	However, doing so is generally not possible, due to the non-negativity requirement on $(x, s, \tau, \kappa)$.

	Consequently, a corrected search direction is computed, as proposed in \cite{Mehrotra1992}.
	The corrected direction hopefully enables one to make longer steps, thus reducing the total number of IPM iterations.
	Let $\eta = 1 - \gamma$, where
	\begin{align}
		\gamma = (1 - \alpha^{\text{aff}})^{2} \min\left( \revision{blue}{\gamma_{min}}, (1 - \alpha^{\text{aff}}) \right)
	\end{align}
	for some $\revision{blue}{\gamma_{min}} > 0$, and
	\begin{align}
		\alpha^{\text{aff}} = \max \left\{ 0 \leq \alpha \leq 1 \mid (x, s, \tau, \kappa) + \alpha (\delta^{\text{aff}}_{x}, \delta^{\text{aff}}_{s}, \delta^{\text{aff}}_{\tau}, \delta^{\text{aff}}_{\kappa}) \geq 0 \right\} .
	\end{align}
	The corrected search direction is then given by
	\begin{align}
		\label{eq:NS_cor_dual}
		\revision{red}{-\prho \delta_{x}} + A^{T} \delta_{y} + \delta_{s} - c \delta_{\tau}			&= \eta r_{d},\\
		\label{eq:NS_cor_primal}
		 A \delta_{x} \revision{red}{+ \drho \delta_{y}} -b \delta_{\tau} 							&= \eta r_{p},\\
		\label{eq:NS_cor_opt}
	    -c^{T} \delta_{x} + b^{T} \delta_{y} \revision{red}{+ \grho \delta_{\tau}} -\delta_{\kappa}   	&= \eta r_{g},\\
	    \label{eq:NS_cor_comp_xs}
	    S \delta_{x} + X \delta_{s}                   			&= -XSe + \gamma \mu e - \Delta_{x}^{\text{aff}} \Delta_{s}^{\text{aff}} e,\\
	    \label{eq:NS_cor_comp_tk}
		\kappa \delta_{\tau} + \tau \delta_{\kappa}   			&= -\tau \kappa + \gamma \mu - \delta_{\tau}^{\text{aff}} \delta_{\kappa}^{\text{aff}},
	\end{align}
	where $\Delta^{\text{aff}}_{x} = Diag(\delta^{\text{aff}}_{x})$ and $\Delta^{\text{aff}}_{s}=Diag(\delta^{\text{aff}}_{s})$.

\subsubsection{\revision{red}{Additional centrality corrections}}
	\revision{red}{Additional centrality corrections}
	aim at improving the centrality of the new iterate, i.e., to keep the complementary products well balanced.
	Doing so generally allows to make longer steps, thus reducing the total number of IPM iterations.
	We implement Gondzio's original technique \cite{Gondzio1996_correction}, with some modifications introduced in \cite{Andersen2000}.

	Let $\delta = (\delta_{x}, \delta_{y}, \delta_{s}, \delta_{\tau}, \delta_{\kappa})$ be the current search direction, $\alpha^{max}$ the corresponding maximum step size, and define
	\begin{align}
		(\bar{x}, \bar{y}, \bar{s}, \bar{\tau}, \bar{\kappa}) := (x, y, s, \tau, \kappa) + \bar{\alpha} (\delta_{x}, \delta_{y}, \delta_{s}, \delta_{\tau}, \delta_{\kappa}),
	\end{align}
	where $\bar{\alpha} := \min(1, 2 \alpha^{max})$ is a tentative step size.
	
	First, a soft target in the space of \revision{blue}{complementarity} products is computed as
	\begin{align}
		t_{j} &= 
		\left\{
		\begin{array}{cl}
			\mu_{l} - \bar{x}_{j} \bar{s}_{j} & \text{ if } \bar{x}_{j} \bar{s}_{j} < \mu_{l}\\
			0 & \text{ if } \bar{x}_{j} \bar{s}_{j} \in [\mu_{l}, \mu_{u}]\\
			\mu_{u} - \bar{x}_{j} \bar{s}_{j} & \text{ if } \bar{x}_{j} \bar{s}_{j} > \mu_{u}
		\end{array}
		\right.
		, \ \ 
		j = 1, \dots, n,\\
		t_{0} &= 
		\left\{
		\begin{array}{cl}
			\mu_{l} - \bar{\tau} \bar{\kappa} & \text{ if } \bar{\tau} \bar{\kappa} < \mu_{l}\\
			0 & \text{ if } \bar{\tau} \bar{\kappa} \in [\mu_{l}, \mu_{u}]\\
			\mu_{u} - \bar{\tau} \bar{\kappa} & \text{ if } \bar{\tau} \bar{\kappa} > \mu_{u}
		\end{array}
		\right.
		,
	\end{align}
	where $\mu_{l} = \gamma \mu \beta$ and $\mu_{u} = \gamma \mu \beta^{-1}$, for a fixed $0 < \beta \leq 1$.
	Then, define
	\begin{align}
		v &= t - \dfrac{e^{T}t + t_{0}}{n+1} e,\\
		v_{0} &= t_{0} - \dfrac{e^{T}t + t_{0}}{n+1}.
	\end{align}
	A correction is obtained by solving the linear system
	\begin{align}
		\label{eq:NS_cor2_dual}
		\revision{red}{-\prho \delta^{c}_{x}} + A^{T} \delta^{c}_{y} + \delta^{c}_{s} - c \delta^{c}_{\tau}			&= 0,\\
		\label{eq:NS_cor2_primal}
		 A \delta^{c}_{x} \revision{red}{+ \drho \delta^{c}_{y}} -b \delta^{c}_{\tau} 							&= 0,\\
		\label{eq:NS_cor2_opt}
	    -c^{T} \delta^{c}_{x} + b^{T} \delta^{c}_{y} \revision{red}{+ \grho \delta^{c}_{\tau}} -\delta^{c}_{\kappa}   	&= 0,\\
	    \label{eq:NS_cor2_comp_xs}
	    S \delta^{c}_{x} + X \delta^{c}_{s}                   			&= v,\\
	    \label{eq:NS_cor2_comp_tk}
		\kappa \delta^{c}_{\tau} + \tau \delta^{c}_{\kappa}   			&= v_{0},
	\end{align}
	which yields a corrected search direction
	\begin{align*}
		(
			\delta_{x},
			\delta_{y},
			\delta_{s},
			\delta_{\tau},
			\delta_{\kappa}
		) +
		(
			\delta^{c}_{x},
			\delta^{c}_{y},
			\delta^{c}_{s},
			\delta^{c}_{\tau},
			\delta^{c}_{\kappa}
		).
	\end{align*}
	The corrected direction is accepted if it results in an increased step size.
	
	Finally, additional centrality corrections are computed only if a sufficient increase in the step size is observed.
	Specifically, as suggested in \cite{Andersen2000}, an additional correction is computed only if the new step size $\alpha$ satisfies
	\begin{align}
		\label{eq:test_keep_correcting}
		\alpha \geq 1.10 \times \alpha^{\text{max}}.
	\end{align}

\revision{red}{
\subsubsection{Regularizations}

    Following \cite{Friedlander2012}, the regularizations are updated as follows.
    Let $\prho^{k}, \drho^{k}, \grho^{k}$ denote the regularization terms at iteration $k$.
    We set $\prho^{0} = \drho^{0} = \grho^{0} = 1$, and use the update rule
    \begin{align}
        \prho^{k+1} = \max \left( \sqrt{\epsilon}, \frac{\prho^{k}}{10} \right),\\ 
        \drho^{k+1} = \max \left( \sqrt{\epsilon}, \frac{\drho^{k}}{10} \right),\\
        \grho^{k+1} = \max \left( \sqrt{\epsilon}, \frac{\grho^{k}}{10} \right),
    \end{align}
    where $\epsilon$ denotes the machine precision, e.g., $\epsilon \simeq 10^{-16}$ for double-precision floating point arithmetic.
    
    Further details on the role of regularizations in the resolution of the Newton system are given in Section \ref{sec:KKT}.
    Let us only mention here that $\prho, \drho, \grho$ may become too small to ensure that the Newton system is properly regularized, e.g., for badly scaled problems. 
    When this is the case, we increase the regularizations by a factor of $100$, and terminate the algorithm if three consecutive increases fail to resolve the numerical issues.
}

\subsubsection{Step size}
	
	Once the final search direction has been computed, the step size $\alpha$ is given by
	\begin{align}
		\alpha = 0.9995 \times \alpha^{max},
	\end{align}
	where
	\begin{align*}
		 \alpha^{max} = \max \left\{ 0 \leq \alpha \leq 1 \mid (x, s, \tau, \kappa) + \alpha (\delta_{x}, \delta_{s}, \delta_{\tau}, \delta_{\kappa}) \geq 0 \right\}.
	\end{align*}

\subsubsection{Stopping criteria}
\label{sec:ipm:stoppingCriteria}

	The algorithm stops when, up to numerical tolerances, one of the following three cases holds: the current iterate is optimal, the primal problem is proven infeasible, the dual problem is proven infeasible (unbounded primal).
	
	The problem is declared solved to optimality if
	\begin{align}
		\label{eq:stop_primalfeas}
		\dfrac{\normInf{r_{p}}}{\tau (1 + \normInf{b})} &< \varepsilon_{p},\\
		\label{eq:stop_dualfeas}
		\dfrac{\normInf{r_{d}}}{\tau (1 + \normInf{c})} &< \varepsilon_{d},\\
		\label{eq:stop_opt}
		\dfrac{|c^{T}x - b^{T}y|}{\tau + |b^{T}y|} &< \varepsilon_{g},
	\end{align}
	where $\varepsilon_{p}, \varepsilon_{d}, \varepsilon_{g}$ are positive parameters.
	The above criteria are independent of the magnitude of $\tau$, and correspond to primal feasibility, dual feasibility and optimality, respectively.
	
	Primal or dual infeasibility is detected if
	\begin{align}
		\label{eq:stop_inf_mu}
		\mu &< \varepsilon_{i},\\
		\label{eq:stop_inf_tau}
		\frac{\tau}{\kappa} &< \varepsilon_{i},
	\end{align}
	where $\varepsilon_{i}$ is a positive parameter.
	When this is the case, a \revision{blue}{complementary} solution with small $\tau$ has been found.
	If $c^{T}x < - \varepsilon_{i}$, the problem is declared dual infeasible (primal unbounded), and $x$ is an unbounded ray.
	If $-b^{T}y < - \varepsilon_{i}$, the problem is declared primal infeasible (dual unbounded), and $y$ is a Farkas dual ray.
	
	Finally, premature termination criteria such as numerical instability, time limit or iteration limit are discussed in Section \ref{sec:implementation}.

\revision{red}{
\section{Solving linear systems}
\label{sec:KKT}
}

Search directions and centrality corrections are obtained by solving several Newton systems such as 	\eqref{eq:NS_aff_dual}-\eqref{eq:NS_aff_comp_tk}, all with identical left-hand side matrix but different right-hand side.	
\revision{red}{Specifically,} each Newton system has the form
\begin{align}
	\label{eq:NewtonSystem}
	\left[
	\begin{array}{ccccccccccc}
		\revision{red}{-\prho I} 		&& A^{T} 	&& I		&&	-c		&&		\\
		A 		&& \revision{red}{\drho I} 			&&		&&	-b		&&		\\
		-c^{T} 	&& b^{T}		&&		&& \revision{red}{\grho}			&& -1 	\\
		S 		&&  			&& X		&&			&&		\\
		 		&&  			&&		&& \kappa	&& \tau	
	\end{array}
	\right]
	\left[
	\begin{array}{l}
		\delta_{x}\\
		\delta_{y}\\
		\delta_{s}\\
		\delta_{\tau}\\
		\delta_{\kappa}
	\end{array}
	\right]
	=
	\left[
		\begin{array}{l}
			\xi_{d}\\
			\xi_{p}\\
			\xi_{g}\\
			\xi_{xs}\\
			\xi_{\tau \kappa}
		\end{array}
	\right]
	,
\end{align}
where $\xi_{p}, \xi_{d}, \xi_{g}, \xi_{xs}, \xi_{\tau \kappa}$ are \revision{red}{appropriate} right-hand side vectors.
\revision{red}{The purpose of this section is to provide further details on the techniques used for the resolution of \eqref{eq:NewtonSystem}, and their implementation in Tulip.
}

\revision{red}{
\subsection{Augmented system}
}

    First, we eliminate $\delta_{s}$ and $\delta_{\kappa}$ as follows:
    \begin{align}
    	\delta_{s} 		&= X^{-1}(\xi_{xs} - S\delta_{x}),\\
    	\delta_{\kappa} 	&= \tau^{-1}(\xi_{\tau \kappa} - \kappa \delta_{\tau}),
    \end{align}
    which yields
    \begin{align}
    	\label{eq:NewtonSystem_reduced}
    	\left[
    	\begin{array}{ccccccccccc}
    		\revision{red}{-(\Theta^{-1} + \prho I)}		&& A^{T} 	&&	-c		\\
    		A 			&&  \revision{red}{\drho I}			&&	-b		\\
    		-c^{T} 		&& b^{T}		&&	\tau^{-1} \kappa \revision{red}{ + \grho}		
    	\end{array}
    	\right]
    	\left[
    	\begin{array}{l}
    		\delta_{x}\\
    		\delta_{y}\\
    		\delta_{\tau}
    	\end{array}
    	\right]
    	=
    	\left[
    		\begin{array}{l}
    			\xi_{d} - X^{-1} \xi_{xs}\\
    			\xi_{p}\\
    			\xi_{g} +\tau^{-1} \xi_{\tau \kappa}
    		\end{array}
    	\right]
    	,
    \end{align}
    where $\Theta = XS^{-1}$.
    
    \revision{red}{As outlined in \cite{Andersen2000,Wright1997}}, a solution to \eqref{eq:NewtonSystem_reduced} is obtained by first solving
    \begin{align}
    	\label{eq:augsys_hsd}
    	\left[
    	\begin{array}{ccccccccccc}
    		\revision{red}{-(\Theta^{-1} + \prho I)}		&& A^{T} 	\\
    		A 			&& \revision{red}{\drho I}
    	\end{array}
    	\right]
    	\left[
    	\begin{array}{l}
    		p\\
    		q
    	\end{array}
    	\right]
    	&=
    	\left[
    		\begin{array}{l}
    			c\\
    			b
    		\end{array}
    	\right]
    	,
    \end{align}
    and
    \begin{align}
    	\label{eq:augsys_hsd_bis}
    	\left[
    	\begin{array}{ccccccccccc}
    		\revision{red}{-(\Theta^{-1} + \prho I)}		&& A^{T} 	\\
    		A 			&& \revision{red}{\drho I}
    	\end{array}
    	\right]
    	\left[
    	\begin{array}{l}
    		u\\
    		v
    	\end{array}
    	\right]
    	&=
    	\left[
    		\begin{array}{l}
    			\xi_{d} - X^{-1} \xi_{xs}\\
    			\xi_{p}
    		\end{array}
    	\right]
    	.
    \end{align}
    \revision{red}{Linear systems of the form \eqref{eq:augsys_hsd} and \eqref{eq:augsys_hsd_bis} are referred to as \emph{augmented systems}.}
    Then, $\delta_{x}, \delta_{y}, \delta_{\tau}$ are computed as follows:
    \begin{align}
    	\delta_{\tau} 	&= \frac{\xi_{g} + \tau^{-1}\xi_{\tau \kappa} + c^{T}u + b^{T}v}{\tau^{-1} \kappa \revision{red}{+ \grho} - c^{T}p + b^{T}q},\\
    	\delta_{x} 		&= u + \delta_{\tau} p,\\
    	\delta_{y} 		&= v + \delta_{\tau} q.
    \end{align}
    \revision{red}{Note that \eqref{eq:augsys_hsd} does not depend on the right-hand side $\xi$.
    Thus, it is only solved once per IPM iteration, and its solution is reused when solving subsequent Newton systems.
    
    Finally, as pointed in \cite{Friedlander2012}, the augmented system's structure motivates the following observations.
    First, the use of primal-dual regularizations controls the effective condition number of the augmented system, which, in turn, improves the algorithm's numerical behavior.
    Second, the augmented system's matrix is symmetric quasi-definite.
    This allows the use of efficient symmetric indefinite factorization techniques, which only require one symbolic analysis at the beginning of the optimization.
    In particular, dual regularizations ensure that this quasi-definite property is retained even when $A$ does not have full rank.
    Third, directly solving the augmented system implicitly handles dense columns in $A$, which make the system of normal equations dense \cite{Wright1997}.
    We have also found this approach to be more numerically stable than a normal equations system-based approach.
    }

\revision{red}{
\subsection{Black-box linear solvers}
    
    The augmented system may be solved using a number of techniques, with direct methods --namely, symmetric factorization techniques-- being the most popular choice.
    Importantly, the algorithm itself is unaffected by \emph{how} the augmented system is solved, provided that it is solved accurately.
}
    Our implementation leverages Julia's multiple dispatch feature and built-in support for linear algebra,
	thus allowing to disentangle the algorithmic framework from the linear algebra implementation.
	
	First, the interior-point algorithm is defined over abstract linear algebra structures.
	Namely, the constraint matrix $A$ is \revision{red}{treated} as an \texttt{AbstractMatrix}, whose concrete type is only known once the model is instantiated.
	Julia's standard library includes extensive support for linear algebra, thus removing the need for a custom abstract linear algebra layer.
	
	\revision{red}{
        Second, while the reduction from the Newton system to the augmented system is performed explicitly, the latter is solved by a black-box linear solver.
        Specifically, we design an \texttt{AbstractKKTSolver} type, from which concrete linear solver implementations inherit.
        The \texttt{AbstractKKTSolver} interface is deliberately minimal, and consists of three functions:\footnote{In Julia, a \texttt{!} is appended to functions that mutate their arguments.} \texttt{setup}, \texttt{update!}, and \texttt{solve!}.
        
        A linear solver is instantiated at the beginning of the optimization using the \texttt{setup} function.
        Custom options can be passed to \texttt{setup} so that the user can select a linear solver of their choice.
        At the beginning of each IPM iteration, the linear solver's state is updated by calling the \texttt{update!} function.
        For instance, if a direct method is used, this step corresponds to updating the factorization.
        Following the call to \texttt{update!}, augmented systems can be solved through the \texttt{solve!} function.
        Default, generic, linear solvers are described in Section \ref{sec:implementation:linalg}, and an example of specialized linear solver is given in Section \ref{sec:res:colgen}.
        Specific details are provided in Tulip's online documentation.\footnote{\url{https://ds4dm.github.io/Tulip.jl/dev/}}
    }
	
	\revision{red}{Finally}, specialized methods are automatically dispatched based on the (dynamic) type of $A$.
	These include matrix-vector and matrix-matrix product, as well as matrix factorization routines.
	We emphasize that the dispatch feature is a core component of the Julia programming language, and is therefore entirely transparent to the user.
	Consequently, one can easily define custom routines that exploit certain properties of $A$, so as to speed-up computation or reduce memory overheads.
	Furthermore, this customization is entirely independent of the interior-point algorithm, thus allowing to properly assess the impact of different linear algebra implementations.

\revision{blue}{
\section{Presolve}
\label{sec:presolve}

    Tulip's presolve module performs elementary reductions, all of which are described in \cite{Andersen1995} and \cite{Gondzio1997_presolve}.
    Therefore, in this section, we only outline the presolve procedure; further implementation details are given in Section \ref{sec:implementation}.
    
    \subsection{Presolve}
    
    We only perform reductions that do not introduce any additional non-zero coefficients, i.e., fill-in, to the problem.
    The presolve procedure is outlined in Algorithm \ref{alg:presolve}, and proceeds as follows.
    
    First, we ensure all bounds are consistent, remove all empty rows and columns, and identify all row singletons, i.e., rows that contain with a single non-zero coefficient.
    Then, a series of passes is performed until no further reduction is possible.
    At each pass, the following reductions are applied: empty rows and columns, fixed variables, row singletons, free and implied free column singletons, forcing and dominated rows, and dominated columns.
    The presolve terminates if infeasibility or unboundedness is detected, in which case an appropriate primal or dual ray is constructed.
    If all rows and columns are eliminated, the problem is declared solved, and a primal-dual optimal solution is constructed.
    
    Finally, to improve the numerical properties of the problem, rows and columns are re-scaled as follows:
    \begin{align}
        \label{eq:presolve:scaling}
        \tilde{A} = D^{(r)} \times A \times D^{(c)},
    \end{align}
    where $\tilde{A}$ is the scaled matrix, $A$ is the constraint matrix of the reduced problem,  and $D^{(r)}$, $D^{(c)}$ are diagonal matrices with coefficients
    \begin{align}
        D^{(r)}_{i} &= \frac{1}{\sqrt{\norm{A_{i, \cdot}}}}, \ \ \ \forall i,\\
        D^{(c)}_{j} &= \frac{1}{\sqrt{\norm{A_{\cdot, j}}}}, \ \ \ \forall j.
    \end{align}
    Column and row bounds, as well as the objective, are scaled appropriately.
        
    \begin{algorithm}
		\begin{algorithmic}
			\REQUIRE Initial LP
			
			\vspace{5pt}
			\STATE Remove empty rows
			\STATE Remove empty columns
			
			\vspace{5pt}
			\REPEAT
				
				\vspace{5pt}
				\STATE Check for bounds inconsistencies
				\STATE Remove empty columns
				
				\vspace{5pt}
				\STATE Remove row singletons
				\STATE Remove fixed variables
				
				\vspace{5pt}
				\STATE Remove row singletons
				\STATE Remove forcing/dominated rows
				
				\vspace{5pt}
				\STATE Remove row singletons
				\STATE Remove free columns singletons
				
				\vspace{5pt}
				\STATE Remove row singletons
				\STATE Remove dominated columns
			
			\vspace{5pt}
			\UNTIL{No reduction is found}
			
			\vspace{5pt}
			\STATE Scale rows and columns
		\end{algorithmic}
		\caption{Presolve procedure}
		\label{alg:presolve}
	\end{algorithm}
	
	\subsection{Postsolve}
	
    A primal-dual solution to the presolved problem is computed using the interior-point algorithm described in Section \ref{sec:ipm}.
    A solution to the original problem is then constructed in a postsolve phase, whose algorithmic details are detailed in \cite{Andersen1995,Gondzio1997_presolve}.
    Note that, in general, the postsolve solution is \emph{not} an interior point with respect to the original problem, e.g., some variables may be at their upper or lower bound.
}

\section{Implementation details}
\label{sec:implementation}

\revision{blue}{Tulip is an officially registered Julia package}, and is publicly available\footnote{Source code is available at \url{https://github.com/ds4dm/Tulip.jl}, and online documentation at \url{https://ds4dm.github.io/Tulip.jl/dev/}} under an open-source license.
\revision{blue}{The entire source code comprises just over $4,000$ lines of Julia code, which makes it easy to read and to modify.
}
The code is single-threaded, however external linear algebra libraries may exploit multiple threads.

\revision{blue}{
    We provide an interface to \texttt{MathOptInterface} \cite{Legat2020_MOI}, a solver-agnostic abstraction layer for optimization.
    Thus, Tulip is readily available through both \texttt{JuMP} \cite{Dunning2017_JuMP}, an open-source algebraic modeling language embedded in Julia, and the convex optimization modeling framework \texttt{Convex} \cite{Convex.jl-2014}.

    Finally, Tulip supports arbitrary precision arithmetic, thus allowing, for instance, to solve problems in quadruple (128 bits) precision.
    This functionality is available from Tulip's direct API and through the \texttt{MathOptInterface} API; it is illustrated in Section \ref{sec:res:precision}.
}

\subsection{Bounds on variables}
\label{sec:implementation:bounds}

\revision{blue}{
    Tulip stores LP problems in the form
    \begin{align}
        \label{eq:LP_gen}
        \begin{array}{rrcll}
        (LP) \ \ \ 
        \displaystyle \min_{x} \ \ \ 
        && c^{T}x & + \ c_{0}\\
        s.t. \ \ \ 
        & l^{b}_{i} \leq & \sum_{j} a_{i, j} x_{j} & \leq u^{b}_{i}, & \ \ \ \forall i = 1, ..., m,\\
        & l^{x}_{j} \leq & x_{j} & \leq u^{x}_{j}, & \ \ \ \forall j = 1, ..., n,\\
        \end{array}
    \end{align}
    where $l^{b,x}_{i, j}, u^{b, x}_{i, j} \in \mathbb{R} \cup \{ - \infty, + \infty \}$, i.e., some bounds may be infinite.
    Before being passed to the interior-point optimizer, the problem is transformed into standard form.
    This transformation occurs after the presolve phase, and is transparent to the user.
    In particular, primal-dual solutions are returned with respect to formulation \eqref{eq:LP_gen}.
}

	Free variables are an outstanding issue for interior-point methods, see, e.g. \cite{Wright1997, Anjos2008}, and are not supported explicitly in Tulip.
	Instead, free variables are automatically split into the difference of two non-negative variables, with the knowledge that this reformulation may introduce some numerical instability.
	
	Although finite upper bounds may be treated as arbitrary constraints, it is more efficient to handle them separately.
	Let $\mathcal{I}$ denote the set of indices of upper-bounded variables.
	Upper-bound constraints then write 
	\begin{align}
		x_{i} \leq u_{i}, \ \ \forall i \in \mathcal{I},
	\end{align}
	which we write in compact form $Ux \leq u$, where $U \in \mathbb{R}^{|\mathcal{I}| \times n}$ and
	\begin{align*}
		U_{i, j} = 
		\left\{
			\begin{array}{ll}
				1 & \text{if } i = j \in \mathcal{I}\\
				0 & \text{otherwise}
			\end{array}
		\right.
		.
	\end{align*}
	Therefore, \revision{blue}{internally}, Tulip \revision{blue}{solves} linear programs of the form
	\begin{align}
		\label{eq:standard_LP_bounds}
		\begin{array}{rll}
			(P) \ \ \ \displaystyle \min_{x, w} \ \ \ 
			& c^{T} x\\
			s.t. \ \ \ 
			& A x = b,\\
			& U x + w =u\\
			& x, w  \geq 0,
		\end{array}
		&
		\hspace*{1cm}
		\begin{array}{rll}
			(D) \ \ \ \displaystyle \max_{y, s, z} \ \ \
			& b^{T} y - u^{T}z \ \ \ \ \ \  \\
			s.t. \ \ 
			& A^{T}y + s - U^{T}z= c,\\
			& s, z \geq 0.\\
			\ 
		\end{array}
	\end{align}
	Let us emphasize that handling upper bounds separately only affects the underlying linear algebra operations, not the interior-point algorithm.
	
	The Newton system \eqref{eq:NewtonSystem} then writes
	\begin{align}
		\label{eq:NS_bounds}
		\left[
		\begin{array}{ccccccccccccccc}
			\revision{red}{-\prho I} 		&& 		&& A^{T} 	&& I		&& -U^{T}	&&	-c		&&		\\
			A 		&& 		&&  \revision{red}{\drho I}			&&		&& 			&&	-b		&&		\\
			U 		&& I		&&  			&&		&& 			&&	-u		&&		\\
			-c^{T} 	&& 		&& b^{T}		&&		&& -u^{T}	&& \revision{red}{\grho}			&& -1 	\\
			S 		&& 		&&  			&& X		&& 			&&			&&		\\
			 		&& Z		&&  			&& 		&& W 		&&			&&		\\
			 		&& 		&&  			&&		&& 			&& \kappa	&& \tau	
		\end{array}
		\right]
		\left[
		\begin{array}{l}
			\delta_{x}\\
			\delta_{w}\\
			\delta_{y}\\
			\delta_{s}\\
			\delta_{z}\\
			\delta_{\tau}\\
			\delta_{\kappa}
		\end{array}
		\right]
		=
		\left[
			\begin{array}{l}
				\xi_{d}\\
				\xi_{p}\\
				\xi_{u}\\
				\xi_{g}\\
				\xi_{xs}\\
				\xi_{wz}\\
				\xi_{\tau \kappa}
			\end{array}
		\right]
		,
	\end{align}
	and it reduces, after performing diagonal substitutions, to solving two augmented systems of the form
	\begin{align}
		\label{eq:augsys_hsd_bounds_reduced}
		\left[
		\begin{array}{cccccc}
			\revision{red}{-(\tilde{\Theta}^{-1} + \prho I)}		&& A^{T} \\
			A 			&& \revision{red}{\drho I} 		
		\end{array}
		\right]
		\left[
		\begin{array}{l}
			p\\
			q
		\end{array}
		\right]
		&=
		\left[
		\begin{array}{l}
			\tilde{\xi}_{d}\\
			\tilde{\xi}_{p}
		\end{array}
		\right]
		,
	\end{align}
	where $\tilde{\Theta} = \left( X^{-1}S + U^{T}(W^{-1}Z)U \right)^{-1}$.
	Note that $\tilde{\Theta}$ is a diagonal matrix with positive diagonal.
	Therefore, system \eqref{eq:augsys_hsd_bounds_reduced} has the same size and structure as \eqref{eq:augsys_hsd}.
	Furthermore, $\tilde{\Theta}$ can be computed efficiently using only vector operations, i.e., without any matrix-matrix nor matrix-vector product.
	
\subsection{Solver parameters}
\label{sec:implementation:params}

	The default values for numerical tolerances of Section \ref{sec:ipm:stoppingCriteria} are
	\begin{align*}
		\varepsilon_{p} = \revision{blue}{\sqrt{\epsilon}},\\
		\varepsilon_{d} = \revision{blue}{\sqrt{\epsilon}},\\
		\varepsilon_{g} = \revision{blue}{\sqrt{\epsilon}},\\
		\varepsilon_{i} = \revision{blue}{\sqrt{\epsilon}},
	\end{align*}
	\revision{blue}{where $\epsilon$ is the machine precision, which depends on the arithmetic.
	For instance, double precision (64 bits) floating point arithmetic corresponds to $\epsilon_{64} \simeq 10^{-16}$, while quadruple precision (128 bits) corresponds to $\epsilon_{128} \simeq 10^{-34}$.}
	
	When computing additional centrality corrections, we use the following default values:
	\begin{align*}
		\revision{blue}{\gamma_{min}} = 10^{-1},\\
		\beta = 10^{-1}.
	\end{align*}
	The default maximum number of centrality corrections is set to $5$.
	
	Finally, the maximum number of IPM iterations is set to a default of $100$.
	A time limit may be imposed by the user, in which case it is checked at the beginning of each IPM iteration.
	
\revision{red}{
\subsection{Default linear solvers}
\label{sec:implementation:linalg}
}
    \revision{red}{Several generic linear algebra implementations are readily available in Tulip, and can be selected without requiring any additional implementation.
    
    The default settings are as follows.
    First, $A$ is stored in a \texttt{SparseMatrixCSC} struct, i.e., in compressed sparse column format.
    Elementary linear algebra operations, e.g., matrix-vector products, employ Julia's standard library \texttt{SparseArrays}.
    Augmented systems are then solved by a direct method, namely, an $LDL^{T}$ factorization of the quasi-definite augmented system.
    Sparse factorizations use either the CHOLMOD module of SuiteSparse \cite{SuiteSparse}, or the \texttt{LDLFactorizations} package \cite{LDLFactorizations}, a Julia translation of SuiteSparse's $LDL^{T}$ factorization code that supports arbitrary arithmetic.
    Tulip uses the former for double precision floating point arithmetic, and the latter otherwise.
    Finally, the solver's log indicates: the model's arithmetic, the linear solver's backend, e.g., CHOLMOD, and the linear system being solved, i.e., either the augmented system of the normal equations system.
    
    As mentioned in Section \ref{sec:KKT}, custom options for linear algebra can be passed to the solver.
    Specifically, the \texttt{MatrixOptions} parameter lets the user select a matrix implementation of their choice, and the \texttt{KKTOptions} parameter is used to specify a choice of linear solver.
    Their usage is depicted in Figure \ref{fig:KKTOptions}.
    
    In Figure \ref{fig:KKTOptions:default}, the default settings are used.
    The model is instantiated at line 3; the \texttt{Model\{Float64\}} syntax indicates that \texttt{Float64} arithmetic is used.
    Then, the problem is read from the \texttt{problem.mps} file at line 4, and the model is solved at line 6.
    Figures \ref{fig:KKTOptions:Dense}, \ref{fig:KKTOptions:CholmodPD}, \ref{fig:KKTOptions:LDLFact} are identical, but select different linear algebra implementations by setting the appropriate \texttt{MatrixOptions} and \texttt{KKTOptions} parameters.
    
    Figure \ref{fig:KKTOptions:Dense} illustrates the use of dense linear algebra.
    Line 7 indicates that $A$ should be stored as a dense matrix.
    Then, at line 8, a dense linear solver is selected through the \texttt{SolverOptions(Dense\_SymPosDef)} setting.
    In this case, the augmented system is reduced to the (dense) normal equations systems, and a dense Cholesky factorization is applied; BLAS/LAPACK routines are automatically called when using single and double precision floating point arithmetic, otherwise Julia's generic routines are called.
    
    In the example of Figure \ref{fig:KKTOptions:CholmodPD}, linear systems are reduced to the normal equations system, and CHOLMOD's sparse Cholesky factorization is applied.
    Note that a single dense column in $A$ results in a fully dense normal equations systems.
    Thus, in the absence of a mechanism for handling dense columns, this approach may be impractical for some large problems.
    Finally, in Figure \ref{fig:KKTOptions:LDLFact}, the augmented system is solved using an $LDL^{T}$ factorization, computed by \texttt{LDLFactorizations}.
}

    \begin{figure}
        \centering
        \begin{subfigure}{\textwidth}
\begin{lstlisting}[language=Python]
import Tulip

model = Tulip.Model{Float64}()             # Instantiate model
Tulip.load_problem!(model, "problem.mps")  # Read problem

Tulip.optimize!(model)                     # Solve the problem
\end{lstlisting}
        \caption{Sample code using default linear algebra settings}
        \label{fig:KKTOptions:default}
        \end{subfigure}
        \par \bigskip
        \begin{subfigure}{\textwidth}
        \begin{lstlisting}[language=Python]
import Tulip

model = Tulip.Model{Float64}()             # Instantiate model
Tulip.load_problem!(model, "problem.mps")  # Read problem

# Select dense linear algebra
model.params.MatrixOptions = MatrixOptions(Matrix)
model.params.KKTOptions = SolverOptions(Dense_SymPosDef)

Tulip.optimize!(model)                     # Solve the problem
\end{lstlisting}
        \caption{Sample code using dense linear algebra}
        \label{fig:KKTOptions:Dense}
        \end{subfigure}
        \par \bigskip
        \begin{subfigure}{\textwidth}
        \begin{lstlisting}[language=Python]
import Tulip

model = Tulip.Model{Float64}()             # Instantiate model
Tulip.load_problem!(model, "problem.mps")  # Read problem

# Solve the normal equations with CHOLMOD
model.params.KKTOptions = SolverOptions(CholmodSolver, normal_equations=true)

Tulip.optimize!(model)                     # Solve the problem
\end{lstlisting}
        \caption{Sample code using CHOLMOD to solve the normal equations system}
        \label{fig:KKTOptions:CholmodPD}
        \end{subfigure}
        \par \bigskip
        \begin{subfigure}{\textwidth}
        \begin{lstlisting}[language=Python]
import Tulip

model = Tulip.Model{Float64}()             # Instantiate model
Tulip.load_problem!(model, "problem.mps")  # Read problem

# Solve the augmented system with LDLFactorizations
model.params.KKTOptions = SolverOptions(LDLFact_SymQuasDef)

Tulip.optimize!(model)                     # Solve the problem
\end{lstlisting}
        \caption{Sample code using \texttt{LDLFactorizations}}
        \label{fig:KKTOptions:LDLFact}
        \end{subfigure}
        
        \caption{Code examples for reading and solving a problem with various linear algebra implementations.}
        \label{fig:KKTOptions}
    \end{figure}

\section{Computational results}
\label{sec:res}

\revision{blue}{%
In this section, we compare Tulip to several open-source and commercial solvers, focusing on those that are available to Julia users.
Let us emphasize that our goal is \emph{not} to perform a comprehensive benchmark of interior-point LP solvers.

We evaluate Tulip's performance and robustness in the following three settings.
First, in Section \ref{sec:res:plato}, we consider general LP instances from H. Mittelmann's benchmark,\footnote{\url{http://plato.asu.edu/ftp/lpbar.html}} which are solved using generic sparse linear algebra.
Then, in Section \ref{sec:res:colgen}, we consider structured instances that arise in decomposition methods, for which we develop specialized linear algebra.
Finally, in Section \ref{sec:res:precision}, we illustrate Tulip's ability to use different levels of arithmetic precision by solving problems in higher precision.
}

%
%
\subsection{\revision{blue}{ Results on general LP instances}}
\label{sec:res:plato}
	
	\revision{blue}{%
	We select all instances from H. Mittelmann's benchmark of barrier LP solvers, except \texttt{qap15} and \texttt{L1\_sixm1000obs}.
	The former is identical to \texttt{nug15}, and the latter could not be solved by any solvers in the prescribed time limit.
	This yields a testset of 43 medium to large-scale instances.}
	We compare the following open-source and commercial solvers: Clp $1.17$ \cite{CLP}, GLPK 4.64 \cite{GLPK}, ECOS 2.0\cite{Domahidi2013}, Tulip 0.5.0, CPLEX 12.10 \cite{CPLEX}, Gurobi 9.0 \cite{Gurobi} and Mosek 9.2 \cite{Mosek}.
	All are accessed through their respective Julia interface.
	We run the interior-point algorithm of each solver with a single thread, no crossover, and a \revision{blue}{$10,000$s} time limit.
	\revision{blue}{For Tulip, the maximum number of IPM iterations is increased from the default 100 to 500.}
	All other parameters are left to their default values. 
	
	Experiments are carried out on \revision{blue}{a cluster of machines equipped with dual Intel Xeon 6148-2.4GHz CPUs, and varying amounts of RAM. Each job is run with a single thread and 16GB of memory.}
	Scripts for running these experiments are available online,\footnote\revision{blue}{\url{https://github.com/mtanneau/LPBenchmarks}}
	\revision{blue}{together with the logfiles of each solver.}

	Computational results are displayed in Table \ref{tab:res:plato}.
	\revision{blue}{%
	For each solver, we report the total number of instances solved, the mean runtime, and individual runtimes for each instance.
	Segmentation faults are indicated by \texttt{seg}, timeouts by \texttt{t}, other failures by \texttt{f}, and reduced accuracy solutions by \texttt{r}.
	The time to read in the data is not included.
}
\revision{red}{
	Mean runtimes are shifted geometric means
	\begin{align*}
	    \mu_{\delta}(t_{1}, ..., t_{N}) 
	    = \left( \prod_{i=1}^{N} (t_{i} + \delta) \right)^{\frac{1}{N}} - \delta
	    = \exp \left[ \frac{1}{N} \sum_{i=1}^{N} \log (t_{i} + \delta) \right] - \delta,
	\end{align*}
    with $\delta=10$ seconds.
}
    
    \revision{blue}{
\begin{table}
    \centering
    \caption{Results on the Mittelmann test set}
    \label{tab:res:plato}
    \begin{tabular}{lrrrrrrr}
\toprule
Problem          &    Clp &  CPLEX &   ECOS &   GLPK & Gurobi &  Mosek &  Tulip\\
\midrule
Solved           &     25 &     39 &     26 &      6 &     41 &     43 &     33\\
Average          & 1607.5 &   52.4 & 2344.7 & 7092.8 &   42.5 &   32.0 &  604.6\\
\midrule
L1\_sixm250obs   & \texttt{seg} &   23.7 & \texttt{f} & \texttt{f} & \texttt{f} &  148.8 & \texttt{f}\\
Linf\_520c       & \texttt{seg} &   25.9 & \texttt{f} & \texttt{seg} &   30.4 &   22.7 & \texttt{t}\\
brazil3          &    2.3 &    0.2 & \texttt{f} & \texttt{f} &    0.6 &    0.6 &    2.3\\
buildingenergy   & \texttt{f} &   18.3 &  362.0 & \texttt{f} &   19.9 &   16.6 &   39.1\\
chrom1024-7      & \texttt{seg} &    0.3 &   96.8 & \texttt{f} &    1.5 &    3.2 &    3.6\\
cont1            & 2322.4 &    5.2 &  138.0 & \texttt{f} &    5.9 &   12.6 &   26.9\\
cont11           &  626.8 & \texttt{f} &  174.7 & \texttt{f} &   14.8 &   12.7 &   51.4\\
dbic1            &  118.5 &    9.6 &  332.9 & \texttt{f} &    9.2 &    9.4 &   30.2\\
degme            & \texttt{t} &  235.5 & \texttt{f} & \texttt{f} &  308.3 &  253.0 & \texttt{t}\\
ds-big           &  237.1 &   29.6 &  331.6\texttt{r} & \texttt{f} &   31.1 &   16.5 &   95.2\\
ex10             & \texttt{t} &    6.3 & \texttt{f} & \texttt{f} &   46.8 &   21.6 & 5427.5\\
fome13           &  413.3 &   19.0 & 1284.0 & \texttt{f} &   17.8 &   16.9 &  538.2\\
irish-e          &  363.6 &   21.1 & \texttt{f} & \texttt{f} &   16.5 &   20.3 &   35.4\\
karted           & 6509.1 &   89.1 & 5801.8 & 9334.9 &  115.7 &   44.4 & 3786.9\\
neos             &  433.4 &   26.7 & 1201.0\texttt{r} & \texttt{seg} &   39.5 &   33.1 &  419.5\\
neos1            & \texttt{f} &    4.9 &  167.2 & \texttt{f} &    6.3 &    4.1 &  130.4\\
neos2            & \texttt{f} &    4.0 &  129.6 & \texttt{f} &    4.7 &    3.4 &  462.1\\
neos3            & \texttt{seg} &   26.5 & 1282.9 & \texttt{f} &   33.7 &   17.1 & 1358.2\\
neos5052403      & 2067.9 &   43.6 & 3489.4\texttt{r} & \texttt{f} &   28.1 &   13.1 &  475.4\\
ns1644855        & \texttt{t} &  334.2 & \texttt{f} & \texttt{f} &  437.0 &  468.0 & \texttt{t}\\
ns1687037        & \texttt{t} & \texttt{f} & \texttt{f} & \texttt{f} &   19.6 &   12.3 &  330.4\\
ns1688926        & \texttt{t} &   25.7 & \texttt{f} & \texttt{f} & \texttt{f} &    2.0 & \texttt{f}\\
nug08-3rd        & \texttt{t} &    3.4 & \texttt{f} & \texttt{f} &    2.8 &   55.0 & \texttt{f}\\
nug15            &  352.2 &   12.7 & 1871.4 & \texttt{f} &    0.9 &   17.8 &  998.6\\
pds-100          & \texttt{t} &  168.2 & \texttt{f} & \texttt{f} &  106.2 &  152.7 & \texttt{f}\\
pds-40           & 1303.7 &   25.6 & \texttt{f} & \texttt{f} &   22.1 &   32.9 & 5000.6\\
psched3-3        & \texttt{t} &   86.6 & \texttt{f} & \texttt{f} &  147.5\texttt{r} &  148.5 & \texttt{t}\\
rail02           & \texttt{t} &  208.9 & \texttt{f} & \texttt{f} &  134.3 &  195.8 & \texttt{t}\\
rail4284         & 5407.8 &   72.2 & 8578.3\texttt{r} & \texttt{f} &  133.5 &   81.0 & 1049.5\\
s100             & 1587.5 &   29.7 & \texttt{f} & \texttt{f} &   38.1 &   30.2 &  894.2\\
s250r10          &  263.5 &   17.8 & \texttt{f} & \texttt{f} &   25.4 &   30.8 &  257.2\\
savsched1        &  183.9 &   27.1 & 2355.4 & \texttt{f} &   25.9 &   55.5 &  138.8\\
self             &   21.5 &    3.2 &  162.3 &   45.1 &    4.0 &    3.1 &   13.3\\
shs1023          &  286.2 & \texttt{f} & \texttt{f} & \texttt{f} &   48.2 &   74.6 &  371.6\\
square41         &  202.8 &    3.4 & 1703.3\texttt{r} & \texttt{f} &    4.9 &   32.7 &  134.0\\
stat96v1         &  164.9 & \texttt{f} &  163.6\texttt{r} & \texttt{f} &   32.2\texttt{r} &    6.3 &   41.3\\
stat96v4         &    1.4 &    1.0 &   31.4 &  261.5 &    1.0 &    2.2 &    1.8\\
stormG2\_1000    & \texttt{seg} &   64.2 & 9593.0 & \texttt{f} &  122.6 &  131.9 &  216.3\\
stp3d            & 1864.1 &   31.6 & 1363.3 & \texttt{f} &   31.9 &   39.2 &  529.7\\
support10        & \texttt{f} &   17.1 & 6210.2 & \texttt{f} &   19.3 &   27.2 & 3553.1\\
tp-6             & 7872.1 &  150.9 & \texttt{f} & \texttt{f} &  203.8 &  312.3 & 5543.0\\
ts-palko         & 1757.1 &   35.9 & 1109.3 & 2315.7 &   50.7 &   31.7 &  841.1\\
watson\_2        &   65.0 &   24.4 & \texttt{f} &  111.0 &   26.6 &   30.1 & \texttt{f}\\
\bottomrule
    \end{tabular}\\
    \texttt{seg}: segmentation fault;
    \texttt{r}: reduced accuracy solution;
    \texttt{t}: time limit;
    \texttt{f}: other failure\\
    All times in seconds.
\end{table}
}
	
	\revision{blue}{
	    First, the three commercial solvers CPLEX, Gurobi and Mosek display similar performance and robustness, and outperform open-source alternatives by one to two orders of magnitude.
	    While CPLEX and Gurobi encountered numerical issues on a few instances, we found that these were resolved by activating crossover.
	    
	    Second, Clp displays a worse performance than expected, solving only $25$ problems with an average runtime about two times larger than Tulip's.
	    In fact, out of $43$ instances, we recorded $5$ segmentation faults, $8$ unidentified errors, with the $10,000$s time limit being reached on the remaining $10$ unsolved instances.
	    A more detailed analysis of the log suggests that segmentation faults and some unknown errors are caused by memory-related issues, i.e., large Cholesky factors that do not fit in memory.
	    We note that those errors do not occur when running Clp through its command-line executable: the executable performs additional checks to decide whether the model should be dualized; this can yield smaller linear systems and thus avoid memory issues. 
	    Nevertheless, given that the \texttt{dualize} option is not available in Clp's C interface\footnote{See discussion in \url{https://github.com/coin-or/Clp/issues/151}}, on which Clp's Julia wrapper is built, the present results best represent the behavior that Julia users would encounter.
	    
	    Third, among open-source solvers, Tulip is the top performer with $33$ instances solved and a mean runtime of $604.6$s, while GLPK has the worst performance with only $6$ instances reportedly solved.
	    Tulip's $5$ failures include $3$ instances that out of memory; for the remaining $2$, i.e., \texttt{ns1688926} and \texttt{watson\_2}, Tulip fails to reach the prescribed accuracy due to numerical issues.
	    A possible remedy to the latter will be discussed in Section \ref{sec:res:precision}.
	    Finally, out of the $26$ instances reported as solved by ECOS, $6$ were solved to reduced accuracy.
	    This situation typically corresponds to ECOS encountering numerical issues close to optimality, but a feasible or close-to-feasible solution is still available.
	}

%
%
\subsection{\revision{blue}{Results on structured LP instances}}
\label{sec:res:colgen}

    We now compare Tulip to state-of-the-art commercial solvers on a
\revision{blue}{%
    collection of structured problems, for which we design specialized linear algebra routines.
    Specifically, we consider the context of Dantzig-Wolfe (DW) decomposition \cite{Dantzig1960} in conjunction with a column-generation (CG) algorithm; we refer to \cite{Desaulniers2006column} for a thorough overview of DW decomposition and CG algorithms.
    Here, we focus on the resolution of the master problem, i.e., we consider problems of the form
	\begin{align}
		\label{eq:MP:objective}
		(MP) \ \ \ \min_{\lambda} \ \ \  & \sum_{r=1}^{R} \sum_{j=1}^{n_{r}} c_{r, j} \lambda_{r, j} + c_{0}^{T} \lambda_{0}\\	
		\label{eq:MP:convexity}
		s.t. \ \ \ 
		& \sum_{j=1}^{n_{r}} \lambda_{r, j} =1, \ \ \ r=1, ..., R,\\
		\label{eq:MP:linking}
		& \sum_{r=1}^{R} \sum_{j=1}^{n_{r}} a_{r, j} \lambda_{r, j} + A_{0} \lambda_{0} = b_{0},\\	
		\label{eq:MP:domain}
		& \lambda \geq 0,
	\end{align}
	where $R$ is the number of sub-problems, $m_{0}$ is the number of linking constraints, $n_{r}$ is the number of columns from sub-problem $r$, $A_{0} \in \R^{m_{0} \times n_{0}}$, and $\forall (r, j), a_{r, j} \in \R^{m_{0}}$.
	Let $M = R + m_{0}$ and $N = n_{0} + n_{1} + \dots + n_{R}$ be the number of constraints and variables in $(MP)$, respectively.
	In what follows, we focus on the case where (i) $R$ is large, typically in the thousands or tens of thousands, (ii) $m_{0}$ is not too large, typically in the hundreds, and (iii) the vectors $a_{r, j} \in \R^{m_{0}}$ and $A_{0}$ are dense.
}

\revision{blue}{
    \subsubsection{Instance collection}
    \label{sec:res:colgen:instances}
        
        We build a collection of master problems from two sources.
        First, we generate instances of Distributed Energy Resources (DER) coordination from  \cite{Anjos2019}.
        We select a renewable penetration rate $\xi =0.33$, a time horizon $T= \{24, 48, 96\}$, and a number of resources $R=\{1024, 2048, 4096, 8192, 16384, 32768\}$.
        Second, we select all two-stage stochastic programming (TSSP) problems from \cite{Gondzio2016_PDCGMlarge} that have at least $1,000$ scenarios.
        This yields $18$ DER instances, and $27$ TSSP instances.
        
        Then, each instance is solved by column generation; master problems are solved with Gurobi's barrier (with crossover) and sub-problems are solved with Gurobi's default settings.
        In the case of DER instances, which contain mixed-integer variables, only the root node of a branch-and-price tree is solved.
        Finally, at every tenth CG iteration and the last, the current master problem is saved.
        Thus, we obtain a dataset of $153$ master problems of varying sizes.
        
        CG algorithms benefit from sub-optimal, well-centered interior solutions from the master problem \cite{Gondzio1996_PDCGM}, which are typically obtained by simply relaxing an IPM solver's optimality tolerance.
        These provide the double benefit of stabilizing the CG procedure, thus reducing the number of CG iterations, and speeding-up the resolution of the master problem by stopping the IPM early.
        Importantly, this approach requires \emph{feasible}, but sub-optimal, dual solutions from the master problem.
        While in classical primal-dual IPMs, feasibility is generally reached earlier than optimality, in the homogeneous algorithm, infeasibilities and complementarity are reduced at the same rate \cite{Andersen2000}.
        As a consequence, for IPM solvers that implement the homogeneous algorithm, such as Mosek, ECOS and Tulip, relaxing optimality tolerances yields no computational gain.
        Nevertheless, let us formally restate that our present goal is \emph{not} to implement a state-of-the-art column-generation solver, but to quantify the benefits of specialized linear algebra in that context; in particular, specialized linear algebra would equally benefit classical primal-dual IPMs, since the approach of \cite{Gondzio1996_PDCGM} does not affect the master problem's structure.
        Therefore, we only implement a vanilla CG procedure, which is described in Appendix \ref{sec:colgen}.
        In particular, we do not make use of any acceleration technique beyond the use of partial pricing.
        
        Table \ref{tab:der:stats} and Table \ref{tab:tssp:stats} display some statistics for DER and TSSP instances, respectively.
        For each instance, we report: the number of sub-problems $R$, the number of CG iterations (Iter), total time spent solving the master problem (Master) and pricing sub-problems (Pricing) during the CG procedure and, for the final $(MP)$: the number of linking constraints ($m_{0}$), the number of variables ($N$), and the proportion of non-zero coefficients in the linking constraints ($\%$nz).
        From the two tables, we see that \texttt{DER}, \texttt{4node} and \texttt{4node-base} instances display relatively dense linking rows, with $35$ to $90\%$ coefficients being non-zeros, and a modest number of linking constraints.
        Other instances are either sparser, e.g., the \texttt{env} and \texttt{env-diss} instances whose linking rows are only $13\%$ dense, or have few linking constraints, e..g, \texttt{phone}.
        Therefore, we expect that our specialized implementation will yield larger gains for the former instances.
        
        \revision{blue}{
\begin{table}
    \centering
    \caption{Column-generation statistics - DER instances}
    \label{tab:der:stats}
    \begin{tabular}{lrrrrcrrr}
\toprule
&& \multicolumn{3}{c}{CG statistics} && \multicolumn{3}{c}{MP statistics}\\
\cmidrule(rl){3-5} \cmidrule(rl){7-9}
Instance & $R$ & Iter & Master(s) & Pricing(s) && $m_{0}$  & $N$ &  $\%$nz\\
\midrule
DER-24 &  1024 &  43 &   4.5 &  16.6 &&  24 &   6493 & 89.7 \\
 &  2048 &  40 &   9.7 &  40.8 &&  24 &  12152 & 89.0 \\
 &  4096 &  41 &  24.0 &  86.5 &&  24 &  24559 & 89.4 \\
 &  8192 &  40 &  74.9 & 155.9 &&  24 &  48668 & 89.7 \\
 & 16384 &  42 & 195.8 & 419.9 &&  24 &  95845 & 90.1 \\
 & 32768 &  40 & 585.7 & 826.3 &&  24 & 192039 & 89.6 \\
DER-48 &  1024 &  49 &  10.8 &  25.7 &&  48 &   7440 & 87.0 \\
 &  2048 &  49 &  24.6 &  50.7 &&  48 &  14736 & 88.0 \\
 &  4096 &  49 &  60.8 & 103.2 &&  48 &  29328 & 88.3 \\
 &  8192 &  50 & 148.1 & 212.3 &&  48 &  59536 & 88.5 \\
 & 16384 &  48 & 355.4 & 418.3 &&  48 & 114832 & 88.5 \\
 & 32768 &  47 & 853.8 & 870.2 &&  48 & 225424 & 88.4 \\
DER-96 &  1024 &  64 &  49.0 &  67.9 &&  96 &   9504 & 86.7 \\
 &  2048 &  56 &  90.8 & 117.0 &&  96 &  16672 & 87.8 \\
 &  4096 &  53 & 191.7 & 220.1 &&  96 &  31520 & 88.2 \\
 &  8192 &  60 & 603.4 & 529.9 &&  96 &  69920 & 88.5 \\
 & 16384 &  57 & 1248.7 & 993.0 &&  96 & 133408 & 89.0 \\
 & 32768 &  54 & 3657.2 & 2163.7 &&  96 & 254240 & 88.7 \\
\bottomrule
    \end{tabular}
\end{table}
}
        \revision{blue}{
\begin{table}
    \centering
    \caption{Column-generation statistics - TSSP instances}
    \label{tab:tssp:stats}
    \begin{tabular}{lrrrrcrrr}
\toprule
&& \multicolumn{3}{c}{CG statistics} && \multicolumn{3}{c}{MP statistics}\\
\cmidrule(rl){3-5} \cmidrule(rl){7-9}
Instance & $R$ & Iter & Master(s) & Pricing(s) && $m_{0}$  & $N$ & $\%$nz\\
\midrule
     4node &  1024 &  24 &   3.4 &   2.9 &&  60 &   5997 & 41.5 \\
      &  2048 &  24 &   7.9 &   7.2 &&  60 &  11614 & 38.8 \\
      &  4096 &  22 &  14.4 &  14.0 &&  60 &  22034 & 38.0 \\
      &  8192 &  23 &  43.8 &  28.1 &&  60 &  44691 & 37.3 \\
      & 16384 &  23 & 114.8 &  58.0 &&  60 &  87569 & 37.9 \\
      & 32768 &  21 & 248.7 &  95.8 &&  60 & 158895 & 36.5 \\
4node-base &  1024 &  26 &   4.5 &   2.8 &&  60 &   6197 & 59.4 \\
 &  2048 &  27 &  11.9 &   5.5 &&  60 &  12968 & 60.2 \\
 &  4096 &  25 &  35.8 &  10.5 &&  60 &  24153 & 60.3 \\
 &  8192 &  22 &  46.6 &  17.4 &&  60 &  43399 & 59.4 \\
 & 16384 &  25 & 143.8 &  45.0 &&  60 &  95792 & 60.4 \\
 & 32768 &  23 & 321.6 &  79.8 &&  60 & 179472 & 60.2 \\
    assets & 37500 &   6 &   2.1 &   6.2 && 13 &  77928 & 38.5 \\
       env &  1200 &   6 &   0.1 &   0.5 &&  85 &   2860 & 12.5 \\
        &  1875 &   6 &   0.1 &   0.7 &&  85 &   4283 & 12.9 \\
        &  3780 &   6 &   0.2 &   1.5 &&  85 &   8357 & 13.3 \\
        &  5292 &   6 &   0.2 &   2.1 &&  85 &  11541 & 13.5 \\
        &  8232 &   6 &   0.4 &   3.3 &&  85 &  17664 & 13.6 \\
        & 32928 &   6 &   2.5 &  13.2 &&  85 &  69783 & 13.8 \\
  env-diss &  1200 &  13 &   0.2 &   0.7 &&  85 &   4439 & 12.3 \\
   &  1875 &  15 &   0.4 &   1.3 &&  85 &   7435 & 12.7 \\
   &  3780 &  15 &   1.0 &   2.6 &&  85 &  15168 & 12.9 \\
   &  5292 &  15 &   1.5 &   3.6 &&  85 &  20745 & 13.0 \\
   &  8232 &  15 &   2.5 &   5.7 &&  85 &  31752 & 13.0 \\
   & 32928 &  14 &  14.8 &  21.8 &&  85 & 123892 & 13.3 \\
     phone & 32768 &   5 &   1.4 &   7.6 &&   9 &  65553 & 83.3 \\
   stormG2 &  1000 &  21 &   7.9 &  10.9 && 306 &   6075 & 23.9 \\
\bottomrule
    \end{tabular}
\end{table}
}
        
        }
    
\revision{red}{
    \subsubsection{Specialized linear algebra}
    \label{sec:res:colgen:linalg}
}
    
    We now describe a specialized Cholesky factorization that exploits the block structure of the master problem.
	First, the constraint matrix of $(MP)$ is unit block-angular, i.e., it has the form
	\begin{align}
		\label{eq:struct_A_unitblockangular}
		A & = 
		\begin{bmatrix}
			e^{T} 	& &&0\\
								& \ddots  & & \vdots\\
								&		 & e^{T} & 0\\
			A_{1} 		& \cdots & A_{R} & A_{0}
		\end{bmatrix},
	\end{align}
	where
	\begin{align}
		A_{r} &=
		\begin{pmatrix}
			|					& 		& |\\
			a_{r, 1}	& \dots & a_{r, n_{r}}\\
			| 					&		& |
		\end{pmatrix}
		\in \mathbb{R}^{m_{0} \times n_{r}}
		.
	\end{align}

	Let us recall that the normal equations system writes
	\begin{align}
	    \label{eq:normaleq}
		\revision{red}{\left(A(\Theta^{-1}+\prho I)^{-1} A^{T} + \drho I \right) \delta_{y}} = \xi,
	\end{align}
	where \revision{red}{$\delta_{y} \in \mathbb{R}^{M}$}, and $\Theta \in \mathbb{R}^{N \times N}$ is a diagonal matrix with positive diagonal.
	\revision{red}{Let $S$ denote the left-hand matrix of \eqref{eq:normaleq}, and define
	\begin{align}
		\label{eq:struct_D_blockdiagonal}
		\tilde{\Theta} = (\Theta^{-1}+\prho I)^{-1} = 
		\begin{pmatrix}
			\tilde{\Theta}_{1}	& 		 & \\
						& \ddots	 & \\
						& 		 & \tilde{\Theta}_{R}\\
						&&& \tilde{\Theta}_{0}
		\end{pmatrix}
		,
	\end{align}
	and $\tilde{\theta}_{r} = \tilde{\Theta}_{r}e \in \R^{n_{r}}$, for $r=0, ..., R$.}
	Consequently, the normal equations system has the form
	\revision{red}{
	\begin{align}
		\label{eq:normalEq_blocks}
		\begin{bmatrix}
			d_{1}   && && && (A_{1}\tilde{\theta}_{1})^{T}\\
									&& \ddots 	&&							&& \vdots\\
									&&			&& 	d_{R} 			&& (A_{R}\tilde{\theta}_{R})^{T}\\
			A_{1}\tilde{\theta}_{1} 	&& \cdots 	&& A_{R} \tilde{\theta}_{R} 	&& \Phi
		\end{bmatrix}
		\begin{bmatrix}
			(\delta_{y})_{1}\\
			\vdots\\
			(\delta_{y})_{R}\\
			(\delta_{y})_{0}
		\end{bmatrix}
		&=
		\begin{bmatrix}
			\xi_{1}\\
			\vdots\\
			\xi_{R}\\
			\xi_{0}
		\end{bmatrix},
	\end{align}}
	where \revision{red}{
	\begin{align}
	    d_{r} &= e^{T}\tilde{\theta}_{r} + \drho, \ \ \ r=1, ..., R,\\
	    \Phi &= \sum_{r=0}^{R} A_{r} \tilde{\Theta}_{r} A_{r}^{T} + \drho I.
	\end{align}
	}

	Then, define
	\begin{align}
		l_{r} & = \frac{1}{\revision{red}{d_{r}}} A_{r} \tilde{\theta}_{r} \in \mathbb{R}^{m_{0}}, \ \ \ r=1, ..., R,\\
		\label{eq:schur_complement}
		C &= \Phi - \sum_{r=1}^{R} \frac{1}{\revision{red}{d_{r}}} (A_{r} \tilde{\theta}_{r}) (A_{r} \tilde{\theta}_{r})^{T} \in \mathbb{R}^{m_{0} \times m_{0}}.
	\end{align}
	Given that both \revision{red}{$S$} and its upper-left block are positive definite, so is the Schur complement $C$.
	Therefore, its Cholesky factorization exists, which we denote $C = L_{C} D_{C} L_{C}^{T}$.
	It then follows that a Cholesky factorization of \revision{red}{$S$} is given by 
	\begin{align}
		\revision{red}{S} & = 
		\underbrace{
		\begin{bmatrix}
			1 & \\
				& \ddots \\
				&	& 1\\
			l_{1} & \cdots & l_{R} & L_{C}
		\end{bmatrix}
		}_{L}
		\times
		\underbrace{
		\begin{bmatrix}
			\revision{red}{d_{1}} & & \\
				& \ddots 	&	\\
				&	& \revision{red}{d_{R}} \\
			 &  & & D_{C}
		\end{bmatrix}
		}_{D}
		\times
		\underbrace{
		\begin{bmatrix}
			1 & \\
				& \ddots \\
				&	& 1\\
			l_{1} & \cdots & l_{R} & L_{C}
		\end{bmatrix}^{T}
		}_{L^{T}}
		.
	\end{align}

	Finally, once the Cholesky factors $L$ and $D$ are computed, the normal equations \eqref{eq:normalEq_blocks} are solved as follows:
	\begin{align}
		\label{eq:normalEq_globalSolve}
		\revision{red}{(\delta_{y})_{0}} &= (L_{C} D_{C} L_{C}^{T})^{-1} \left( \xi_{0} - \sum_{r=1}^{R} \xi_{r} l_{r} \right), \\
		\label{eq:normalEq_localSolve}
		\revision{red}{(\delta_{y})_{r}} &= \dfrac{1}{\revision{red}{d_{r}}} \xi_{r} - l_{r}^{T} \revision{red}{(\delta_{y})_{0}}, \ \ \ r=1, ..., R.
	\end{align}

	Exploiting the structure of $A$ yields several computational advantages.
	First, the factors $L$ and $D$ can be computed directly from $A$ and $\Theta$, i.e., the matrix \revision{red}{$S$} does not need to be explicitly formed nor stored, thus saving both time and memory.
	Second, the sparsity structure of $L$ is known beforehand.
	Specifically, the lower blocks $l_{1}, \dots, l_{R}$ are all dense column vectors, and the Schur complement $C$ is a dense $m_{0}\times m_{0}$ matrix.
	Therefore, one does not need a preprocessing phase wherein a sparsity-preserving ordering is computed, thus saving time and making memory allocation fully known in advance.
	Third, since most heavy operations are performed on dense matrices, efficient cache-exploiting kernels for dense linear algebra can be used, further speeding-up the computations.
	Finally, note that most operations such as forming the Cholesky factors and performing the backward substitutions, are amenable to parallelization.
    
\revision{blue}{
    \subsubsection{Experimental setup}
    \label{sec:res:colgen:setup}
        
        We implement the specialized routines described above in Julia.\footnote{\url{https://github.com/mtanneau/UnitBlockAngular.jl}}
        Specifically, we define a \texttt{UnitBlockAngularMatrix} type, together with specialized matrix-vector product methods, and a \texttt{UnitBlockAngularFactor} type for computing factorizations and solving linear systems.
        Dense linear algebra operations are performed by BLAS/LAPACK routines directly, and the entire implementation is less than 250 lines of code.
        
        This specialized implementation is passed to the solver by setting the \texttt{MatrixOptions} and \texttt{KKTOptions} parameters accordingly, as illustrated in Figure \ref{code:KKT:unitblockangular}.
        A \texttt{Model} object is first created at line 4, and the problem data is imported at line 5.
        At line 11, we set the \texttt{MatrixOptions} parameter to specify that the constraint matrix is of the \texttt{UnitBlockAngularMatrix} type with $m_{0}=24$ linking constraints, $n_{0}=72$ linking variables, $n=6421$ non-linking variables, and $R=1024$ unit blocks.
        Then, at line 16, we select the \texttt{UnitBlockAngularFactor} type as a linear solver.
        Finally, the correct matrix and linear solver are instantiated within the \texttt{optimize!} call at line 20.
        Importantly, let us emphasize that no modification was made to Tulip's source code: the correct methods are automatically selected by Julia's multiple dispatch feature, with no performance loss for calling an external function.
        
        Experiments are carried out on an Intel Xeon E5-2637@3.50GHz CPU, 128GB RAM machine running Linux; scripts and data for running these experiments are available online.\footnote{Code for generating DER instances is available at \url{https://github.com/mtanneau/DER_experiments} and for TSPP instances at \url{https://github.com/mtanneau/TSSP}}
        We compare the following IPM solvers: CPLEX 12.10 \cite{CPLEX}, Gurobi 9.0 \cite{Gurobi}, Mosek 9.2.5 \cite{Mosek}, Tulip 0.5.0 with generic linear algebra, and Tulip 0.5.0 with specialized linear algebra; the latter is denoted Tulip*.
        We run each solver on a single thread, and no crossover.
        Presolve may alter the structure of $A$ in several ways by, e.g., reducing the number of linking constraints, eliminating variables --possibly some entire blocks-- or modifying the unit blocks during scaling.
        Therefore, since we are interested in comparing the per-iteration cost among solvers, we also deactivate presolve.
        Finally, none of the selected IPM solvers have any warm-start capability, i.e., in a CG algorithm, master problems would effectively be solved from scratch at each CG iteration.
        Thus, solving master problems independently of one another, as is done here, does not invalidate our analysis.
}

\begin{figure}
    \centering
\begin{lstlisting}[language=Python]
import Tulip
using UnitBlockAngular

model = Tulip.Model{Float64}()
Tulip.load_problem!(model, "DER_24_1024_43.mps")  # read file

# Deactivate presolve
model.params.Presolve = 0

# Select matrix options
model.params.MatrixOptions = Tulip.TLA.MatrixOptions(
    UnitBlockAngularMatrix,
    m0=24, n0=72, n=6421, R=1024
)
# Select custom linear solver
model.params.KKTOptions = Tulip.KKT.SolverOptions(
    UnitBlockAngularFactor
)

Tulip.optimize!(model)  # solve the problem
\end{lstlisting}
\caption{Sample Julia code illustrating the use of a custom \texttt{UnitBlockAngularMatrix} type and specialized factorization.}
\label{code:KKT:unitblockangular}
\end{figure}

\revision{blue}{
    \subsubsection{Results}
    \label{src:res:colgen:res}
        
        Results are reported in Table \ref{tab:res:rmp:short}; for conciseness, only the final master problem of each CG instance is included here.
        Results for the entire collection can be found in Table \ref{tab:res:rmp:full}, Appendix \ref{app:res}.
        For each instance and solver, we report total CPU time (T), in seconds, and the number of IPM iterations (Iter).
        In Table \ref{tab:res:rmp:full}, the number of CG iterations (at which the instance was obtained) is also displayed.
        
        \revision{blue}{
\begin{table}
    \centering
    \caption{Performance comparison of IPM solvers on structured instances}
    \label{tab:res:rmp:short}
    \begin{tabular}{lrrrrrrrrrrr}
\toprule
 & & \multicolumn{2}{c}{CPLEX} & \multicolumn{2}{c}{Gurobi} & \multicolumn{2}{c}{Mosek} & \multicolumn{2}{c}{Tulip} & \multicolumn{2}{c}{Tulip*}\\
\cmidrule(rl){3-4} \cmidrule(rl){5-6} \cmidrule(rl){7-8} \cmidrule(rl){9-10} \cmidrule(rl){11-12}
Problem          &   $R$ & T(s) & Iter & T(s) & Iter & T(s) & Iter & T(s) & Iter & T(s) & Iter\\
\midrule
DER-24           &  1024 &    0.2 &  33 &    0.2 &  27 & \textbf{   0.2} &  21 &    1.1 &  33 &    0.5 &  33\\
           &  2048 &    0.4 &  48 & \textbf{   0.4} &  36 &    0.4 &  27 &    2.5 &  47 &    0.6 &  47\\
           &  4096 &    1.0 &  40 &    1.0 &  32 & \textbf{   0.8} &  26 &    4.7 &  38 &    1.0 &  38\\
           &  8192 &    4.3 &  79 &    2.7 &  46 & \textbf{   2.4} &  38 &   19.0 &  67 &    2.6 &  68\\
           & 16384 &   10.8 &  93 &    5.3 &  48 & \textbf{   5.3} &  42 &   49.8 &  86 &    5.3 &  83\\
           & 32768 &   33.9 & 148 &   19.4 &  85 &   12.3 &  43 &  103.6 &  91 & \textbf{  11.4} &  86\\
DER-48           &  1024 &    0.5 &  37 &    0.4 &  19 & \textbf{   0.3} &  21 &    1.8 &  28 &    0.4 &  28\\
           &  2048 &    1.6 &  40 &    1.0 &  21 & \textbf{   0.8} &  25 &    5.7 &  37 &    0.9 &  37\\
          &  4096 &    4.1 &  44 &    2.0 &  25 &    2.0 &  27 &   14.1 &  39 & \textbf{   1.7} &  39\\
          &  8192 &    9.7 &  51 &    4.2 &  20 &    4.5 &  24 &   37.0 &  46 & \textbf{   3.4} &  47\\
           & 16384 &   22.3 &  64 &    9.9 &  29 &    9.3 &  28 &   89.7 &  60 & \textbf{   7.4} &  57\\
           & 32768 &   57.1 &  85 &   21.6 &  32 &   21.1 &  33 &  178.8 &  59 & \textbf{  14.2} &  54\\
DER-96           &  1024 &    3.3 &  38 &    1.2 &  19 &    0.9 &  22 &    6.6 &  31 & \textbf{   0.9} &  31\\
           &  2048 &    7.9 &  45 &    2.4 &  20 &    1.7 &  21 &   18.2 &  38 & \textbf{   1.7} &  37\\
           &  4096 &   16.3 &  51 &    5.5 &  24 &    5.2 &  28 &   42.6 &  40 & \textbf{   3.2} &  40\\
           &  8192 &   51.7 &  75 &   15.5 &  29 &   11.1 &  31 &  137.6 &  60 & \textbf{   8.8} &  57\\
           & 16384 &  107.8 &  86 &   31.9 &  31 &   24.4 &  39 &  260.0 &  55 & \textbf{  17.3} &  59\\
           & 32768 &  291.9 & 119 &  102.9 &  54 & \textbf{  55.5} &  47 &  753.7 &  89 &   65.4 &  86\\
4node            &  1024 &   15.7 &  21 &    0.4 &  43 & \textbf{   0.2} &  25 &    1.3 &  30 &    0.5 &  32\\
            &  2048 &    0.7 &  38 &    0.6 &  27 & \textbf{   0.6} &  25 &    2.1 &  36 &    0.9 &  36\\
            &  4096 &    1.1 &  27 &    1.7 &  37 & \textbf{   0.7} &  17 &    4.5 &  28 &    1.2 &  28\\
            &  8192 &    2.7 &  30 &    2.3 &  29 & \textbf{   1.8} &  24 &   12.3 &  35 &    2.7 &  33\\
            & 16384 &    5.8 &  29 &   10.7 &  53 & \textbf{   4.0} &  22 &   26.4 &  33 &    4.6 &  33\\
            & 32768 &   17.0 &  57 &   18.7 &  55 &   14.6 &  41 &   74.8 &  56 & \textbf{  14.2} &  59\\
4node-base       &  1024 &   17.0 &  17 &    1.0 &  60 & \textbf{   0.3} &  27 &    1.4 &  28 &    0.6 &  27\\
       &  2048 &    1.0 &  35 &    2.6 &  72 & \textbf{   0.8} &  33 &    3.7 &  32 &    0.9 &  33\\
       &  4096 &    2.3 &  38 &    5.3 &  72 & \textbf{   1.5} &  34 &    9.1 &  34 &    1.8 &  34\\
       &  8192 &    3.8 &  29 &    3.7 &  27 & \textbf{   2.6} &  25 &   19.7 &  36 &    2.8 &  36\\
       & 16384 &   13.5 &  53 &   26.2 &  74 &    8.0 &  37 &   63.4 &  53 & \textbf{   7.0} &  47\\
       & 32768 &   20.3 &  37 &   29.0 &  43 &   14.9 &  30 &  107.7 &  48 & \textbf{  12.9} &  50\\
assets           & 37500 &    1.6 &  21 & \textbf{   0.6} &  12 &    1.1 &  20 &    2.0 &  13 &    1.0 &  13\\
env              &  1200 &    0.0 &  21 & \textbf{   0.0} &  12 &    0.1 &  16 &    0.3 &  16 &    0.3 &  16\\
              &  1875 &    0.1 &  22 & \textbf{   0.0} &  12 &    0.1 &  13 &    0.4 &  16 &    0.4 &  16\\
              &  3780 &    0.1 &  25 & \textbf{   0.1} &  12 &    0.1 &  14 &    0.7 &  17 &    0.5 &  17\\
              &  5292 &    0.2 &  27 & \textbf{   0.1} &  13 &    0.1 &  13 &    0.7 &  17 &    0.7 &  17\\
              &  8232 &    0.3 &  26 & \textbf{   0.2} &  13 &    0.3 &  14 &    1.1 &  18 &    1.2 &  18\\
              & 32928 &    1.7 &  26 & \textbf{   0.9} &  13 &    1.3 &  17 &    5.1 &  21 &    4.4 &  21\\
env-diss         &  1200 &    0.1 &  15 & \textbf{   0.0} &  15 &    0.1 &  17 &    0.4 &  23 &    0.4 &  23\\
         &  1875 &    0.1 &  17 & \textbf{   0.1} &  18 &    0.1 &  18 &    0.6 &  22 &    0.5 &  22\\
         &  3780 &    0.2 &  20 & \textbf{   0.1} &  18 &    0.2 &  18 &    1.0 &  22 &    1.0 &  22\\
         &  5292 &    0.3 &  22 & \textbf{   0.3} &  23 &    0.3 &  22 &    1.3 &  25 &    1.5 &  25\\
         &  8232 &    1.0 &  31 &    0.6 &  29 & \textbf{   0.5} &  23 &    3.2 &  35 &    2.6 &  35\\
         & 32928 &    4.8 &  28 & \textbf{   2.1} &  22 &    2.5 &  19 &   10.0 &  27 &    7.7 &  27\\
phone            & 32768 &    0.5 &  15 & \textbf{   0.4} &   8 &    0.6 &   8 &    1.9 &  10 &    0.7 &  10\\
stormG2          &  1000 &    1.6 &  37 &    0.8 &  18 & \textbf{   0.5} &  22 &    4.0 &  29 &    1.7 &  28\\
\bottomrule
    \end{tabular}\\
    Results obtained without presolve.
\end{table}
}
        
        We begin by comparing Tulip with and without specialized linear algebra.
        First, the number of IPM iterations is almost identical between the two, with differences never exceeding 6 IPM iterations.
        The differences are caused by small numerical discrepancies between the linear algebra implementations, which remain negligible until close to the optimum.
        Second, using specialized linear algebra results in a significant speedup, especially on larger and denser instances.
        Indeed, on large \texttt{DER} and \texttt{4node} instances, we typically observe a tenfold speedup.
        For smaller and sparser instances, e.g., the \texttt{env} instances, or with very few linking constraints such as \texttt{phone}, using specialized linear algebra still brings a moderate performance improvement.
        
        Next, we compare Tulip with specialized linear algebra, Tulip*, against state-of-the-art commercial solvers.
        Given CPLEX's poorer relative performance on this test set, in the following we mainly discuss the results of Tulip* in comparison with Mosek and Gurobi.
        First, our specialized implementation is able to outperform commercial codes on the larger and denser instances, while remaining within a reasonable factor on smaller and sparse instances.
        The largest performance improvement is observed on the \texttt{DER-48} instance with $R=32,768$, for which Tulip* achieves a $30 \%$ speedup over the fastest commercial alternative.
        This demonstrates that, when exploiting structure, open-source solvers can compete with state-of-the-art commercial codes.
        Second, Tulip's iteration count is typically $50$ to $100 \%$ larger than that of Mosek and Gurobi.
        When comparing average per-iteration times on the denser instances, we observe that Tulip is generally $1.5$ to $3$ times faster than Gurobi and Mosek.
}
        Recall that the cost of an individual IPM iteration depends not only on problem size and the efficiency of the underlying linear algebra, but also on algorithmic features such as the number of corrections, which we cannot \revision{blue}{measure} directly.
    \revision{blue}{Nevertheless, the performance difference is significant enough to suggest that algorithmic improvements aimed at reducing the number of IPM iterations would substantially improve Tulip's performance.}

{
\subsection{Solving problems in extended precision}
\label{sec:res:precision}

    Almost all optimization solvers perform computations in double precision (64 bits) floating-point arithmetic, denoted by \texttt{double} and \texttt{Float64} in C and Julia, respectively.
    Julia's parametric type system and multiple dispatch allow to write generic code: in the present case, this results in Tulip's code can be used with \emph{arbitrary} arithmetic.
    We now illustrate this functionality for solving problems in higher precision.
    
    The ability to use extended precision is useful is various contexts.
    First, while typical numerical tolerances for most LP solvers range from $10^{-6}$ to $10^{-8}$, one may \emph{require} levels of precision that exceed what double-precision arithmetic can achieve.
    For instance, in \cite{Ma2015}, the authors consider problems where variations of order $10^{-6}$ to $10^{-10}$ are meaningful.
    One remedy to this issue is to use, e.g., quadruple-precision arithmetic.
    Second, even with ``standard" tolerances, solvers may encounter numerical issues for badly scaled problems, sometimes resulting in the optimization being aborted.
    These issues may be alleviated by using higher precision, thereby allowing to solve a given challenging instance, albeit at a performance cost.
    Finally, in the course of developing a new optimization software or algorithmic technique, identifying whether inconsistencies are due to numerical issues, mathematical errors, or software bugs, can be a daunting and time-consuming task.
    In that context, the ability to easily switch between different arithmetics enables one to factor out rounding errors and related issues, thereby identifying --or ruling out-- other sources of errors.
    
    Let us note that a handful of simplex-based solvers have the capability to compute extended-precision or exact solutions to LP problems, either by performing computations in exact arithmetic, solving a sequence of LPs with increasing precision, or using iterative refinement techniques; the reader is referred to \cite{Gleixner2016} for an overview of such approaches and available software.
    We are not aware of any existing interior-point solver with this capability.
    As pointed out in \cite{Gleixner2016}, performing all computations in the prescribed arithmetic, as is the case in Tulip, is intractable for large problems.
    Consequently, Tulip should not be viewed as a competitive tool for solving LPs in extended precision.
    Rather, the main advantage of our implementation is its simplicity and flexibility: it required no modification of the source code, runs the same algorithm regardless of the arithmetic, and its use is straightforward.
    Indeed, as Figure \ref{fig:arithmetic} illustrates, besides loading the appropriate packages, the user only needs to specify the arithmetic when creating a model; the rest of the code is identical.
    Therefore, using Tulip with higher-precision arithmetic is best envisioned as a prototyping tool, or to occasionally solve a numerically challenging problem.
    
\begin{figure}
    \centering
    \begin{subfigure}{\textwidth}
\begin{lstlisting}[language=Python]
import Tulip

model = Tulip.Model{Float64}()  # Float64 arithmetic
Tulip.load_problem!(tlp, "neos2.mps")  # read file

Tulip.optimize!(model)  # solve the problem
\end{lstlisting}
    \caption{Using \texttt{Float64} arithmetic.}
    \label{fig:arithmetic:F64}
    \end{subfigure}
    
    \begin{subfigure}{\textwidth}
    \centering
\begin{lstlisting}[language=Python]
import Tulip
using DoubleFloats

model = Tulip.Model{Double64}()  # Double64 arithmetic
Tulip.load_problem!(tlp, "neos2.mps")  # read file

Tulip.optimize!(model)  # solve the problem
\end{lstlisting}
    \caption{Using \texttt{Double64} arithmetic.}
    \label{fig:arithmetic:D64}
    \end{subfigure}
    \caption{Sample Julia code illustrating the use of different arithmetics.}
    \label{fig:arithmetic}
\end{figure}

    As an example of this use case, we consider the $6$ instances from Section \ref{sec:res:plato} that required more than $100$ IPM iterations; this generally indicates numerical issues.
    Each instance is solved with Tulip in quadruple-precision arithmetic.
    We use the \texttt{Double64} type from the \texttt{DoubleFloats} Julia package, which implements the so-called ``double-double" arithmetic, wherein a pair of double-precision numbers is used to approximate one quadruple-precision number.
    This implementation allows to exploit fast, hardware-implemented, double-precision arithmetic, while achieving similar precision as 128 bits floating point arithmetic.
    Experiments were carried on the same cluster of machines as in Section \ref{sec:res:plato}.
    Besides the different arithmetic, we increase the time limit to $40,000$s and set tolerances to $10^{-8}$, that is, the problems are solved up to usual double-precision tolerances.
    All other settings are left identical to those of Section \ref{sec:res:plato}.

    Results are displayed in Table \ref{tab:res:plato:Double64}.
    For each instance and arithmetic, we report the total solution time (CPU) in seconds, the number of IPM iterations (Iter), and the solver's result status (Status).
    We first note that, when using \texttt{Double64} arithmetic, all instances are solved to optimality.
    This validates the earlier finding that instances \texttt{ns1688926} and \texttt{watson\_2} did encounter numerical issues.
    Second, we observe a drastic reduction in the number of IPM iterations from \texttt{Float64} to \texttt{Double64}, with decreases in iteration counts ranging from $40\%$ to over $90\%$ in the case of \texttt{neos2} and \texttt{ns1688926}.
    Third, while the per-iteration cost of \texttt{Double64} is typically 8x larger than that of \texttt{Float64}, overall computing times do not increase as much due to the reduction in IPM iterations.
    In fact, in the extreme cases of \texttt{ns1688926}, solving the problem in \texttt{Double64} is significantly faster than solving it in \texttt{Float64}.
    Finally, the results of Table \ref{tab:res:plato:Double64} suggest that Tulip would most benefit from greater numerical stability on instances such as \texttt{neos2}, \texttt{ns1688926}, \texttt{stat96v1} and \texttt{watson\_2}.
    This may include, for instance, the use of iterative refinement when solving Newton systems.
    On the other hand, similar iterations counts for both arithmetics, would have suggested algorithmic issues, e.g., short steps being taken due to the iterates being far from the central path.

    \begin{table}
        \centering
        \caption{Problematic instances from the Mittelmann benchmark}
        \label{tab:res:plato:Double64}
        \begin{tabular}{lcrrrcrrr}
        \toprule
        && \multicolumn{3}{c}{\texttt{Float64}} && \multicolumn{3}{c}{\texttt{Double64}}\\
        \cmidrule(rl){3-5} \cmidrule(rl){7-9}
        Instance && CPU (s) & Iter & Status && CPU(s) & Iter & Status\\
        \midrule
            \texttt{neos2}      &&  462.1 & 460 & Optimal && 265.1 & 37 & Optimal\\
            \texttt{ns1688926}  &&  1007.7 & 500 & Iterations && 142.8 & 18 & Optimal\\
            \texttt{s250r10}    &&  257.2 & 169 & Optimal && 1385.0 & 93 & Optimal\\
            \texttt{shs1023}    &&  371.6 & 266 & Optimal && 968.8 & 105 & Optimal\\
            \texttt{stat96v1}   &&  41.3 & 275 & Optimal && 30.4 & 42 & Optimal\\
            \texttt{watson\_2}    &&  295.7 & 500 & Iterations && 243.4 & 67 & Optimal\\
        \bottomrule
        \end{tabular}
    \end{table}
}

\section{Conclusion}
\label{sec:conclusion}

In this paper, we have described a \revision{red}{regularized} homogeneous interior-point algorithm and its implementation in Tulip, an open-source linear optimization solver.
\revision{blue}{Our solver is written in Julia, and leverages some of the language's features to propose a flexible and easily-customized implementation.}
Most notably, \revision{blue}{Tulip's} algorithmic framework is fully disentangled from linear algebra implementations \revision{blue}{and the choice of arithmetic}.

The performance of the code has been evaluated on \revision{blue}{generic instances from H. Mittelmann's benchmark} testset, on \revision{blue}{two sets} of structured instances for which we developed specialized linear algebra routines, \revision{blue}{and on numerically problematic instances using higher-precision arithmetic.
The computational evaluation has shown three main results.
First, when solving generic LP instances, Tulip is competitive with open-source IPM solvers that have a Julia interface.
Second, when solving structured problems, the use of custom linear algebra routines yields a tenfold speedup over generic ones, thereby outperforming state-of-the-art commercial IPM solvers on larger and denser instances.
}
These results demonstrate the benefits of being able to seamlessly integrate specialized linear algebra within an interior-point algorithm.
\revision{blue}{Third, in a development context, Tulip can be conveniently used in conjunction with higher-precision arithmetic, so as to alleviate numerical issues.}

\revision{blue}{%
Finally, future developments will consider the use of iterative methods for solving linear systems, the development of more general structured linear algebra routines and their multi-threaded implementation, and more efficient algorithmic techniques for solving problems in extended precision.
Because of the way in which Tulip has been designed, all those developments do not require any significant rework of the code structure.}

\section*{Acknowledgements}
    We thank Dominique Orban for helpful discussions on the regularization scheme and its implementation.
    We are also indebted to three anonymous referees for their careful reading and constructive suggestions that helped us improving the quality and readability of the paper.

\bibliographystyle{spmpsci}      
\bibliography{refs}

\newpage
\appendix

\section{Dantzig-Wolfe decomposition and column generation}
\label{sec:colgen}

In this section, we present the Dantzig-Wolfe decomposition principle \cite{Dantzig1960} and the basic column-generation framework.
We refer to \cite{Desaulniers2006column} for a thorough overview of column generation, and the relation between Dantzig-Wolfe decomposition and Lagrangian decomposition.

\subsection{Dantzig-Wolfe decomposition}
\label{sec:colgen:subsec:dw}
	\revision{blue}{Consider} the problem
	\begin{align*}
		(P) \ \ \ \min_{x} \ \ \  & \sum_{r=0}^{R} c_{r}^{T} x_{r}\\
		s.t. \ \ \ & \sum_{r=0}^{R} A_{r} x_{r} = b_{0},\\
		& \revision{blue}{x_{0} \geq 0,}\\
		& x_{r} \in \mathcal{X}_{r}, \ \ \ \revision{blue}{r=1, ..., R},
	\end{align*}
	\revision{blue}{where}, for each $r\revision{blue}{=1, ..., R}$, $\mathcal{X}_{r}$ is defined by a finite number of linear inequalities, plus integrality restrictions on some of the coordinates of $x_{r}$.
	Therefore, \revision{blue}{the convex hull of $\mathcal{X}_{r}$, denoted by $conv(\mathcal{X}_{r})$, is a polyhedron whose set of extreme points (resp. extreme rays) is denoted by $\Omega_{r}$ (resp. $\Gamma_{r}$).
	Any element of $conv(\mathcal{X}_{r})$ can thus be written as a convex combination of extreme points $\{ \omega \}_{\omega \in \Omega_{r}}$, plus a non-negative combination of extreme rays $\{ \rho \}_{\rho \in \Gamma_{r}}$ i.e.,}
	\begin{align}
		conv(\mathcal{X}_{r}) = 
		\left\{
			\sum_{\omega \in \Omega_{r}} \lambda_{\omega} \omega
			\revision{blue}{+ \sum_{\rho \in \Gamma_{r}} \lambda_{\rho} \rho}
			\ \middle| \ 
			\lambda \geq 0,
			\sum_{\omega}\lambda_{\omega} = 1
		\right\}.
	\end{align}
	
	The Dantzig-Wolfe decomposition principle \cite{Dantzig1960} then consists in \revision{blue}{substituting $x_{r}$ with such a combination of extreme points and extreme rays.}
	This change of variable yields the \revision{blue}{so-called} \emph{Master Problem}
	\begin{align}
		\label{eq:MP_objective}
		(MP) \ \ \ \min_{x, \lambda} \ \ \ 
		& \revision{blue}{c_{0}^{T}x_{0} +} \sum_{r=1}^{R} \sum_{\omega \in \Omega_{r}} c_{r, \omega} \lambda_{r, \omega} 
		    + \sum_{r=1}^{R} \sum_{\rho \in \Gamma_{r}} c_{r, \rho} \lambda_{r, \rho} \\	
		\label{eq:MP_convexity}	
		s.t. \ \ \ 
		& \sum_{\omega \in \Omega_{r}} \lambda_{r, \omega} =1, \ \ \ \revision{blue}{r=1, ..., R}\\
		\label{eq:MP_linking}
		& \revision{blue}{A_{0}x_{0}} + \sum_{r=1}^{R} \sum_{\omega \in \Omega_{r}} a_{r, \omega} \lambda_{r, \omega} + \sum_{r=1}^{R} \sum_{\rho \in \Gamma_{r}} a_{r, \rho} \lambda_{r, \rho} = b_{0},\\	
		\label{eq:MP_positive}
		& \revision{blue}{x_{0}}, \lambda \geq 0,\\
		\label{eq:MP:integer}
		& \revision{blue}{\sum_{\omega \in \Omega_{r}} \lambda_{r, \omega} \omega
			+ \sum_{\rho \in \Gamma_{r}} \lambda_{r, \rho} \rho \in \mathcal{X}_{r}, \ \ \ r=1, ..., R}
	\end{align}
	where $c_{r, \omega} = c_{r}^{T} \omega$, \revision{blue}{$c_{r, \rho} = c_{r}^{T} \rho$}, and $a_{r, \omega} = A_{r} \omega$, \revision{blue}{$a_{r, \rho} = A_{r} \rho$}.
	\revision{blue}{Constraints \eqref{eq:MP_convexity} and \eqref{eq:MP_linking} are referred to as \emph{convexity} and \emph{linking} constraints, respectively.
	
	The linear relaxation of $(MP)$ is given by \eqref{eq:MP_objective}-\eqref{eq:MP_positive}; its objective value is greater or equal to that of the linear relaxation of $(P)$ \cite{Desaulniers2006column}.
	Note that if $(P)$ in a linear program, i.e., all variables are continuous, then constraints \eqref{eq:MP:integer} are redundant, and \eqref{eq:MP_objective}-\eqref{eq:MP_positive} is equivalent to $(P)$.
	In the mixed-integer case, problem \eqref{eq:MP_objective}-\eqref{eq:MP_positive} is the root node in a branch-and-price tree.
	In this work, we focus on solving this linear relaxation.
	Thus, in what follows, we make a slight abuse of notation and use the term ``Master Problem" to refer to \eqref{eq:MP_objective}-\eqref{eq:MP_positive} instead.
	}
	
	
	

\subsection{Column generation}
\label{sec:colgen:subsec:colgen}
	
	The Master Problem has exponentially many variables.
	Therefore, it is typically solved by column generation, wherein only a small subset of the variables are considered.
	Additional variables are generated iteratively by solving an auxiliary sub-problem.
	
	Let $\bar{\Omega}_{r}$ \revision{blue}{(resp. $\bar{\Gamma}_{r}$)} be a small subset of $\Omega_{r}$ \revision{blue}{(resp. of $\Gamma_{r}$)}, and define the \emph{Restricted Master Problem} (RMP)
	\begin{align}
		\label{eq:RMP_objective}
		(RMP) \ \ \ \min_{\lambda} \ \ \ 
		& \revision{blue}{c_{0}^{T}x_{0} +} 
		\sum_{r=1}^{R} \sum_{\omega \in \bar{\Omega}_{r}} c_{r, \omega} \lambda_{r, \omega}
		\revision{blue}{+ \sum_{r=1}^{R} \sum_{\rho \in \bar{\Gamma}_{r}} c_{r, \rho} \lambda_{r, \rho}} \\	
		\label{eq:RMP_convexity}	
		s.t. \ \ \ 
		& \sum_{\omega \in \bar{\Omega}_{r}} \lambda_{r, \omega} =1, \ \ \ \revision{blue}{r=1, ..., R}\\
		\label{eq:RMP_linking}
		& \sum_{r=1}^{R} \sum_{\omega \in \bar{\Omega}_{r}} a_{r, \omega} \lambda_{r, \omega}
		\revision{blue}{+ \sum_{r=1}^{R} \sum_{\rho \in \bar{\Gamma}_{r}} a_{r, \rho} \lambda_{r, \rho}} = b_{0},\\	
		\label{eq:RMP_positive}
		& \revision{blue}{x_{0}}, \lambda \geq 0.
	\end{align}
	In all that follows, we assume that $(RMP)$ is feasible and bounded.
	Note that feasibility can be obtained by adding artificial slacks and surplus variables with sufficiently large cost, effectively implementing an $l_{1}$ penalty.
	If the RMP is unbounded, then so is the MP.
	
	\revision{blue}{Let $\sigma \in \mathbb{R}^{R}$ and $\pi \in \mathbb{R}^{m_{0}}$ denote the vector of dual variables associated to convexity constraints \eqref{eq:RMP_convexity} and linking constraints constraints \eqref{eq:RMP_linking}, respectively.
	Here, we assume that $(\sigma, \pi)$ is dual-optimal for $(RMP)$; the use of interior, sub-optimal dual solutions is explored in \cite{Gondzio1996_PDCGM}.
	Then, for given $r$, $\omega \in \Omega_{r}$ and $\rho \in \Gamma_{r}$, the reduced cost of variable $\lambda_{r, \omega}$ is
	\[
		\bar{c}_{r, \omega} = c_{r, \omega} - \pi^{T} a_{r, \omega} - \sigma_{r} = (c_{r}^{T} - \pi^{T} A_{r}) \omega - \sigma_{r},
	\]
	while the reduced cost of variable $\lambda_{r, \rho}$ is
	\[
		\bar{c}_{r, \rho}  = c_{r, \rho}   - \pi^{T} a_{r, \rho} = (c_{r}^{T} - \pi^{T} A_{r}) \rho.
	\]
	If $\bar{c}_{r, \omega} \geq 0$ for all $r$, $\omega \in \Omega_{r}$ and $\bar{c}_{r, \rho} \geq 0$ for all $r$, $\rho \in \Gamma_{r}$, then the current solution is optimal for the MP.
	Otherwise, a variable with negative reduced cost is added to the RMP.
	Finding such a variable, or proving that none exists, is called the \emph{pricing step}.
	
	Explicitly iterating through the exponentially large sets $\Omega_{r}$ and $\Gamma_{r}$ is prohibitively expensive.
	Nevertheless, the pricing step can be written as the following MILP:
	\begin{align}
		(SP_{r}) \ \ \ \min_{x_{r}} \ \ \  & (c_{r}^{T}  - \pi^{T}A)x_{r} - \sigma_{r}\\
		s.t. \ \ \ & x_{r} \in \mathcal{X}_{r},
	\end{align}
	which we refer to as the $r^{th}$ \emph{sub-problem}.
	If $SP_{r}$ is infeasible, then $\mathcal{X}_{r}$ is empty, and the original problem $P$ is infeasible. This case is ruled out in all that follows.
	Then, since the objective of $SP_{r}$ is linear, any optimal solution is either an extreme \revision{blue}{point} $\omega \in \Omega_{r}$ (bounded case), or an extreme ray $\rho \in \Gamma_{r}$ (unbounded case).
	The corresponding variable $\lambda_{r, \omega}$ or $\lambda_{r, \rho}$ is identified by retrieving an optimal \revision{blue}{point} or unbounded ray.
	Finally, note that all $R$ sub-problems $SP_{1}, \dots, SP_{R}$ can be solved independently from one another.
	Optimality in the Master Problem is attained when no variable with negative reduced cost can be identified from all $R$ sub-problems.
	
	We now describe a basic column-generation procedure, which is formally stated in Algorithm \ref{alg:colgen}.
	The algorithm starts with an initial RMP that contains a small subset of columns, some of which may be artificial to ensure feasibility.
	At the beginning of each iteration, the RMP is solved to optimality, and a dual solution $(\pi, \sigma)$ is obtained which is used to perform the pricing step.
	Each sub-problem is solved to identify a variable with most negative reduced cost.
	If a variable with negative reduced cost is found, it is added to the RMP; if not, the column-generation procedure stops.
	}
	
	\begin{algorithm}[H]
		\small
		\begin{algorithmic}[1]
			\REQUIRE Initial RMP
			
			\WHILE{stopping criterion not met}
				
				\STATE Solve RMP and obtain optimal dual variables $(\pi, \sigma)$
				
				\vspace{5pt}
				
				\STATE \COMMENT{\emph{Pricing step}}
				\FORALL{$r \in \mathcal{R}$}
					\STATE Solve $SP_{r}$ with the query point $(\pi, \sigma_{r})$; obtain $\omega^{*}$ or $\rho^{*}$
					\IF{$\bar{c}_{r,\omega^{*}} < 0$ or $\bar{c}_{r,\rho^{*}} < 0$}
						\STATE Add corresponding column to the RMP
					\ENDIF
				\ENDFOR
				
				\vspace{5pt}
				
				\STATE \COMMENT{\emph{Stopping criterion}}
				\IF{no column added to RMP}
					\STATE STOP
				\ENDIF
			\ENDWHILE
		\end{algorithmic}
		\caption{Column-generation procedure}
		\label{alg:colgen}
	\end{algorithm}
	
	For large instances with numerous subproblems, full pricing, wherein all subproblems are solved at each iteration, is often not the most efficient approach.
	Therefore, we implemented a partial pricing strategy, in which subproblems are solved in a random order until either all subproblems have been solved, or a user-specified number of columns with negative reduced cost have been generated.

\revision{blue}{
\section{Detailed results on structured LP instances}
\label{app:res}

\revision{blue}{
\begin{longtable}{lrrrrrrrrrrrr}
\caption{Structured instances: performance comparison of IPM solvers \label{tab:res:rmp:full}}\\
\toprule
 & & & \multicolumn{2}{c}{CPLEX} & \multicolumn{2}{c}{Gurobi} & \multicolumn{2}{c}{Mosek} & \multicolumn{2}{c}{Tulip} & \multicolumn{2}{c}{Tulip*}\\
\cmidrule(rl){4-5} \cmidrule(rl){6-7} \cmidrule(rl){8-9} \cmidrule(rl){10-11} \cmidrule(rl){12-13}
Instance          & $R$ & CG & T(s) & Iter & T(s) & Iter & T(s) & Iter & T(s) & Iter & T(s) & Iter\\
\midrule
\endfirsthead
\caption{(continued)}\\
\toprule
 & & & \multicolumn{2}{c}{CPLEX} & \multicolumn{2}{c}{Gurobi} & \multicolumn{2}{c}{Mosek} & \multicolumn{2}{c}{Tulip} & \multicolumn{2}{c}{Tulip*}\\
\cmidrule(rl){4-5} \cmidrule(rl){6-7} \cmidrule(rl){8-9} \cmidrule(rl){10-11} \cmidrule(rl){12-13}
Instance          & $R$ & CG & T(s) & Iter & T(s) & Iter & T(s) & Iter & T(s) & Iter & T(s) & Iter\\
\midrule
\endhead
\midrule
\endfoot
\bottomrule
\endlastfoot
DER-24           &  1024 & 10 & \textbf{   0.0} &  19 &    0.0 &  15 &    0.1 &  19 &    0.4 &  19 &    0.3 &  19\\
DER-24           &  1024 & 20 &    0.1 &  27 & \textbf{   0.1} &  23 &    0.1 &  21 &    0.5 &  23 &    0.3 &  23\\
DER-24           &  1024 & 30 &    0.1 &  28 & \textbf{   0.1} &  22 &    0.1 &  24 &    0.8 &  26 &    0.3 &  26\\
DER-24           &  1024 & 40 &    0.2 &  44 & \textbf{   0.2} &  27 &    0.2 &  28 &    1.2 &  37 &    0.4 &  39\\
DER-24           &  1024 & 43 &    0.2 &  33 &    0.2 &  27 & \textbf{   0.2} &  21 &    1.1 &  33 &    0.5 &  33\\
DER-24           &  2048 & 10 &    0.2 &  31 & \textbf{   0.1} &  21 &    0.1 &  20 &    0.8 &  23 &    0.3 &  23\\
DER-24           &  2048 & 20 &    0.3 &  30 & \textbf{   0.1} &  19 &    0.2 &  18 &    1.0 &  22 &    0.3 &  22\\
DER-24           &  2048 & 30 & \textbf{   0.3} &  29 &    0.3 &  28 &    0.3 &  20 &    1.4 &  30 &    0.5 &  30\\
DER-24           &  2048 & 40 &    0.4 &  48 & \textbf{   0.4} &  36 &    0.4 &  27 &    2.5 &  47 &    0.6 &  47\\
DER-24           &  4096 & 10 &    0.4 &  35 & \textbf{   0.2} &  20 &    0.3 &  19 &    1.3 &  28 &    0.5 &  28\\
DER-24           &  4096 & 20 &    0.8 &  39 &    0.4 &  23 & \textbf{   0.4} &  22 &    2.1 &  29 &    0.5 &  29\\
DER-24           &  4096 & 30 &    1.3 &  65 &    1.0 &  42 & \textbf{   0.7} &  29 &    5.4 &  58 &    0.9 &  56\\
DER-24           &  4096 & 40 &    1.1 &  38 &    1.1 &  32 & \textbf{   0.9} &  26 &    5.0 &  38 &    0.9 &  38\\
DER-24           &  4096 & 41 &    1.0 &  40 &    1.0 &  32 & \textbf{   0.8} &  26 &    4.7 &  38 &    1.0 &  38\\
DER-24           &  8192 & 10 &    0.9 &  32 & \textbf{   0.5} &  18 &    0.6 &  21 &    2.6 &  25 &    0.7 &  25\\
DER-24           &  8192 & 20 &    2.0 &  39 &    1.1 &  26 & \textbf{   1.1} &  21 &    5.9 &  34 &    1.1 &  34\\
DER-24           &  8192 & 30 &    2.9 &  62 & \textbf{   1.8} &  36 &    2.1 &  40 &   12.5 &  55 &    1.9 &  55\\
DER-24           &  8192 & 40 &    4.3 &  79 &    2.7 &  46 & \textbf{   2.4} &  38 &   19.0 &  67 &    2.6 &  68\\
DER-24           & 16384 & 10 &    2.5 &  47 &    1.3 &  26 &    1.7 &  26 &    9.0 &  39 & \textbf{   1.2} &  36\\
DER-24           & 16384 & 20 &    4.1 &  42 &    2.1 &  29 &    2.5 &  22 &   13.8 &  37 & \textbf{   2.0} &  37\\
DER-24           & 16384 & 30 &    5.4 &  55 &    3.4 &  36 &    3.6 &  26 &   23.1 &  48 & \textbf{   3.0} &  48\\
DER-24           & 16384 & 40 &   12.4 & 110 &   10.2 &  88 & \textbf{   6.0} &  51 &   57.6 & 100 &    6.7 & 100\\
DER-24           & 16384 & 42 &   10.8 &  93 &    5.3 &  48 & \textbf{   5.3} &  42 &   49.8 &  86 &    5.3 &  83\\
DER-24           & 32768 & 10 &    4.6 &  39 &    3.3 &  34 &    3.5 &  23 &   17.9 &  36 & \textbf{   2.2} &  34\\
DER-24           & 32768 & 20 &   11.0 &  53 &    8.8 &  52 &    8.0 &  39 &   47.4 &  66 & \textbf{   5.5} &  65\\
DER-24           & 32768 & 30 &   14.5 &  68 &   12.3 &  56 & \textbf{   8.2} &  31 &   96.1 & 100 &   11.2 & 100\\
DER-24           & 32768 & 40 &   33.9 & 148 &   19.4 &  85 &   12.3 &  43 &  103.6 &  91 & \textbf{  11.4} &  86\\
DER-48           &  1024 & 10 &    0.1 &  24 & \textbf{   0.1} &  13 &    0.1 &  21 &    0.8 &  24 &    0.3 &  24\\
DER-48           &  1024 & 20 &    0.2 &  26 & \textbf{   0.2} &  20 &    0.2 &  22 &    1.1 &  26 &    0.3 &  26\\
DER-48           &  1024 & 30 &    0.3 &  31 & \textbf{   0.2} &  16 &    0.2 &  22 &    1.5 &  32 &    0.5 &  32\\
DER-48           &  1024 & 40 &    0.4 &  32 &    0.4 &  21 & \textbf{   0.3} &  22 &    1.6 &  30 &    0.4 &  30\\
DER-48           &  1024 & 49 &    0.5 &  37 &    0.4 &  19 & \textbf{   0.3} &  21 &    1.8 &  28 &    0.4 &  28\\
DER-48           &  2048 & 10 &    0.3 &  26 &    0.3 &  19 & \textbf{   0.3} &  20 &    1.3 &  24 &    0.4 &  25\\
DER-48           &  2048 & 20 &    0.7 &  37 &    0.5 &  21 &    0.5 &  22 &    2.2 &  31 & \textbf{   0.4} &  31\\
DER-48           &  2048 & 30 &    0.8 &  37 &    0.6 &  19 &    0.5 &  21 &    2.6 &  27 & \textbf{   0.5} &  27\\
DER-48           &  2048 & 40 &    1.2 &  38 &    0.9 &  24 &    0.7 &  21 &    4.1 &  33 & \textbf{   0.7} &  33\\
DER-48           &  2048 & 49 &    1.6 &  40 &    1.0 &  21 & \textbf{   0.8} &  25 &    5.7 &  37 &    0.9 &  37\\
DER-48           &  4096 & 10 &    0.8 &  34 &    0.6 &  19 &    0.6 &  21 &    3.2 &  28 & \textbf{   0.4} &  28\\
DER-48           &  4096 & 20 &    1.5 &  41 &    1.1 &  24 &    1.0 &  24 &    5.8 &  34 & \textbf{   0.8} &  34\\
DER-48           &  4096 & 30 &    1.7 &  38 &    1.4 &  23 &    1.4 &  23 &    8.2 &  35 & \textbf{   1.0} &  35\\
DER-48           &  4096 & 40 &    3.0 &  40 &    2.2 &  30 &    1.9 &  25 &    9.9 &  33 & \textbf{   1.4} &  33\\
DER-48           &  4096 & 49 &    4.1 &  44 &    2.0 &  25 &    2.0 &  27 &   14.1 &  39 & \textbf{   1.7} &  39\\
DER-48           &  8192 & 10 &    2.1 &  39 &    1.5 &  26 &    1.5 &  25 &    8.1 &  32 & \textbf{   1.1} &  32\\
DER-48           &  8192 & 20 &    3.9 &  44 &    1.9 &  18 &    2.0 &  23 &   12.7 &  31 & \textbf{   1.5} &  31\\
DER-48           &  8192 & 30 &    7.2 &  55 &    2.9 &  26 &    2.9 &  26 &   20.4 &  39 & \textbf{   2.2} &  39\\
DER-48           &  8192 & 40 &    7.3 &  45 &    4.0 &  24 &    3.8 &  22 &   25.4 &  38 & \textbf{   2.4} &  38\\
DER-48           &  8192 & 50 &    9.7 &  51 &    4.2 &  20 &    4.5 &  24 &   37.0 &  46 & \textbf{   3.4} &  47\\
DER-48           & 16384 & 10 &    5.0 &  49 &    2.9 &  25 &    3.8 &  35 &   22.4 &  41 & \textbf{   2.3} &  41\\
DER-48           & 16384 & 20 &    7.8 &  45 &    5.5 &  28 &    5.2 &  26 &   31.5 &  37 & \textbf{   2.8} &  37\\
DER-48           & 16384 & 30 &   14.6 &  59 &    7.3 &  29 &    6.3 &  25 &   53.6 &  50 & \textbf{   5.0} &  48\\
DER-48           & 16384 & 40 &   16.3 &  53 &    9.3 &  27 &    8.5 &  27 &   64.5 &  50 & \textbf{   5.2} &  45\\
DER-48           & 16384 & 48 &   22.3 &  64 &    9.9 &  29 &    9.3 &  28 &   89.7 &  60 & \textbf{   7.4} &  57\\
DER-48           & 32768 & 10 &   10.8 &  49 &    7.4 &  27 &    8.1 &  29 &   46.9 &  41 & \textbf{   4.1} &  41\\
DER-48           & 32768 & 20 &   16.8 &  47 &    8.6 &  24 &   11.2 &  32 &   69.5 &  42 & \textbf{   5.7} &  41\\
DER-48           & 32768 & 30 &   30.5 &  61 &   14.9 &  26 &   13.4 &  26 &  107.5 &  51 & \textbf{  10.0} &  51\\
DER-48           & 32768 & 40 &   36.2 &  57 &   21.1 &  31 &   16.9 &  28 &  133.8 &  51 & \textbf{  10.4} &  46\\
DER-48           & 32768 & 47 &   57.1 &  85 &   21.6 &  32 &   21.1 &  33 &  178.8 &  59 & \textbf{  14.2} &  54\\
DER-96           &  1024 & 10 &    0.5 &  27 &    0.3 &  18 & \textbf{   0.3} &  20 &    1.4 &  23 &    0.5 &  23\\
DER-96           &  1024 & 20 &    0.8 &  29 &    0.5 &  18 &    0.5 &  25 &    2.4 &  27 & \textbf{   0.4} &  27\\
DER-96           &  1024 & 30 &    1.2 &  32 &    0.6 &  17 &    0.6 &  23 &    3.5 &  30 & \textbf{   0.5} &  30\\
DER-96           &  1024 & 40 &    1.6 &  34 &    1.0 &  19 &    0.7 &  22 &    4.1 &  31 & \textbf{   0.6} &  32\\
DER-96           &  1024 & 50 &    2.2 &  34 &    1.2 &  19 &    0.8 &  22 &    5.8 &  30 & \textbf{   0.7} &  30\\
DER-96           &  1024 & 60 &    2.6 &  34 &    1.4 &  19 &    1.0 &  23 &    6.9 &  31 & \textbf{   0.8} &  31\\
DER-96           &  1024 & 64 &    3.3 &  38 &    1.2 &  19 &    0.9 &  22 &    6.6 &  31 & \textbf{   0.9} &  31\\
DER-96           &  2048 & 10 &    1.2 &  37 &    0.8 &  21 &    0.7 &  29 &    4.0 &  29 & \textbf{   0.5} &  29\\
DER-96           &  2048 & 20 &    2.2 &  33 &    1.1 &  19 &    0.9 &  23 &    5.9 &  26 & \textbf{   0.8} &  26\\
DER-96           &  2048 & 30 &    2.5 &  44 &    1.6 &  22 &    1.3 &  25 &    9.9 &  35 & \textbf{   1.1} &  35\\
DER-96           &  2048 & 40 &    4.8 &  38 &    2.3 &  26 &    1.5 &  23 &   12.4 &  33 & \textbf{   1.2} &  33\\
DER-96           &  2048 & 50 &    6.7 &  41 &    2.4 &  23 &    1.8 &  25 &   15.2 &  36 & \textbf{   1.6} &  36\\
DER-96           &  2048 & 56 &    7.9 &  45 &    2.4 &  20 &    1.7 &  21 &   18.2 &  38 & \textbf{   1.7} &  37\\
DER-96           &  4096 & 10 &    3.0 &  41 &    1.7 &  24 &    1.5 &  28 &    9.5 &  32 & \textbf{   1.0} &  32\\
DER-96           &  4096 & 20 &    4.4 &  51 &    2.9 &  27 &    1.9 &  26 &   18.2 &  39 & \textbf{   1.6} &  40\\
DER-96           &  4096 & 30 &    6.0 &  53 &    3.6 &  24 &    2.7 &  28 &   21.1 &  35 & \textbf{   2.0} &  36\\
DER-96           &  4096 & 40 &   13.6 &  53 &    4.4 &  24 &    3.3 &  27 &   31.2 &  39 & \textbf{   2.4} &  39\\
DER-96           &  4096 & 50 &   14.2 &  45 &    5.7 &  25 &    4.2 &  25 &   36.1 &  37 & \textbf{   2.7} &  37\\
DER-96           &  4096 & 53 &   16.3 &  51 &    5.5 &  24 &    5.2 &  28 &   42.6 &  40 & \textbf{   3.2} &  40\\
DER-96           &  8192 & 10 &    5.6 &  53 &    4.3 &  27 &    3.0 &  28 &   23.0 &  35 & \textbf{   2.6} &  35\\
DER-96           &  8192 & 20 &   11.1 &  62 &    7.2 &  33 &    4.9 &  32 &   43.4 &  40 & \textbf{   3.4} &  40\\
DER-96           &  8192 & 30 &   13.5 &  59 &    8.8 &  31 &    5.9 &  26 &   54.4 &  40 & \textbf{   3.8} &  40\\
DER-96           &  8192 & 40 &   32.7 &  63 &   12.6 &  35 &    7.4 &  25 &   77.6 &  45 & \textbf{   5.0} &  45\\
DER-96           &  8192 & 50 &   39.3 &  65 &   11.5 &  25 &   10.1 &  33 &   89.1 &  44 & \textbf{   6.6} &  45\\
DER-96           &  8192 & 60 &   51.7 &  75 &   15.5 &  29 &   11.1 &  31 &  137.6 &  60 & \textbf{   8.8} &  57\\
DER-96           & 16384 & 10 &   12.6 &  61 &   11.0 &  37 &    6.9 &  34 &   55.0 &  41 & \textbf{   4.4} &  41\\
DER-96           & 16384 & 20 &   21.2 &  62 &   14.5 &  31 &   10.9 &  31 &   92.3 &  42 & \textbf{   6.1} &  42\\
DER-96           & 16384 & 30 &   30.1 &  68 &   18.5 &  32 &   14.2 &  34 &  147.9 &  50 & \textbf{  10.1} &  52\\
DER-96           & 16384 & 40 &   70.0 &  69 &   21.8 &  28 &   16.1 &  30 &  196.5 &  54 & \textbf{  11.5} &  52\\
DER-96           & 16384 & 50 &   85.5 &  73 &   27.7 &  29 &   18.6 &  32 &  231.8 &  57 & \textbf{  14.5} &  54\\
DER-96           & 16384 & 57 &  107.8 &  86 &   31.9 &  31 &   24.4 &  39 &  260.0 &  55 & \textbf{  17.3} &  59\\
DER-96           & 32768 & 10 &   28.1 &  70 &   25.4 &  45 &   18.0 &  39 &  152.5 &  52 & \textbf{  11.8} &  49\\
DER-96           & 32768 & 20 &   39.9 &  57 &   33.8 &  36 &   18.9 &  28 &  180.4 &  37 & \textbf{  10.7} &  39\\
DER-96           & 32768 & 30 &   61.9 &  72 &   46.6 &  34 &   27.9 &  31 &  337.6 &  58 & \textbf{  21.3} &  58\\
DER-96           & 32768 & 40 &  174.6 &  88 &   70.8 &  42 &   40.4 &  39 &  483.2 &  69 & \textbf{  30.0} &  66\\
DER-96           & 32768 & 50 &  233.6 & 102 &   58.8 &  32 &   46.8 &  36 &  609.0 &  74 & \textbf{  43.1} &  72\\
DER-96           & 32768 & 54 &  291.9 & 119 &  102.9 &  54 & \textbf{  55.5} &  47 &  753.7 &  89 &   65.4 &  86\\
4node            &  1024 & 10 &    0.1 &  28 &    0.3 &  53 & \textbf{   0.1} &  28 &    0.9 &  31 &    0.5 &  30\\
4node            &  1024 & 20 &    0.2 &  27 &    0.2 &  22 & \textbf{   0.2} &  26 &    1.0 &  27 &    0.4 &  27\\
4node            &  1024 & 24 &   15.7 &  21 &    0.4 &  43 & \textbf{   0.2} &  25 &    1.3 &  30 &    0.5 &  32\\
4node            &  2048 & 10 &   19.6 &  24 &    0.7 &  51 & \textbf{   0.4} &  32 &    1.9 &  44 &    0.9 &  37\\
4node            &  2048 & 20 &    0.7 &  38 &    0.8 &  42 & \textbf{   0.5} &  37 &    1.9 &  33 &    0.9 &  32\\
4node            &  2048 & 24 &    0.7 &  38 &    0.6 &  27 & \textbf{   0.6} &  25 &    2.1 &  36 &    0.9 &  36\\
4node            &  4096 & 10 &    0.9 &  36 &    2.1 &  63 & \textbf{   0.7} &  28 &    3.6 &  40 &    1.3 &  40\\
4node            &  4096 & 20 &    0.9 &  23 &    1.0 &  27 & \textbf{   0.6} &  19 &    3.7 &  26 &    1.3 &  26\\
4node            &  4096 & 22 &    1.1 &  27 &    1.7 &  37 & \textbf{   0.7} &  17 &    4.5 &  28 &    1.2 &  28\\
4node            &  8192 & 10 & \textbf{   1.8} &  33 &    3.4 &  62 &    1.8 &  33 &   10.2 &  44 &    2.1 &  43\\
4node            &  8192 & 20 &    3.2 &  42 &    4.3 &  51 & \textbf{   2.3} &  36 &   14.2 &  43 &    3.2 &  44\\
4node            &  8192 & 23 &    2.7 &  30 &    2.3 &  29 & \textbf{   1.8} &  24 &   12.3 &  35 &    2.7 &  33\\
4node            & 16384 & 10 &    6.8 &  61 &   11.4 &  85 & \textbf{   5.7} &  53 &   34.4 &  62 &    5.7 &  67\\
4node            & 16384 & 20 &    7.0 &  42 &   20.8 & 108 &    6.4 &  44 &   31.6 &  45 & \textbf{   5.5} &  44\\
4node            & 16384 & 23 &    5.8 &  29 &   10.7 &  53 & \textbf{   4.0} &  22 &   26.4 &  33 &    4.6 &  33\\
4node            & 32768 & 10 &    9.8 &  42 &   11.8 &  42 &    9.1 &  40 &   56.4 &  52 & \textbf{   8.3} &  53\\
4node            & 32768 & 20 &   17.0 &  58 &   35.0 &  95 & \textbf{  13.5} &  45 &   81.9 &  60 &   15.7 &  65\\
4node            & 32768 & 21 &   17.0 &  57 &   18.7 &  55 &   14.6 &  41 &   74.8 &  56 & \textbf{  14.2} &  59\\
4node-base       &  1024 & 10 & \textbf{   0.1} &  24 &    0.2 &  18 &    0.3 &  21 &    0.8 &  24 &    0.3 &  24\\
4node-base       &  1024 & 20 &    0.3 &  25 &    0.3 &  23 & \textbf{   0.2} &  28 &    1.3 &  28 &    0.5 &  28\\
4node-base       &  1024 & 26 &   17.0 &  17 &    1.0 &  60 & \textbf{   0.3} &  27 &    1.4 &  28 &    0.6 &  27\\
4node-base       &  2048 & 10 &   15.0 &  15 &    0.8 &  36 & \textbf{   0.3} &  23 &    1.4 &  29 &    0.5 &  30\\
4node-base       &  2048 & 20 &    0.8 &  37 &    1.0 &  31 & \textbf{   0.7} &  35 &    3.1 &  36 &    0.8 &  36\\
4node-base       &  2048 & 27 &    1.0 &  35 &    2.6 &  72 & \textbf{   0.8} &  33 &    3.7 &  32 &    0.9 &  33\\
4node-base       &  4096 & 10 &    0.9 &  35 &    3.3 &  92 & \textbf{   0.7} &  24 &    4.4 &  38 &    0.9 &  42\\
4node-base       &  4096 & 20 &    1.8 &  38 &    4.4 &  76 & \textbf{   1.1} &  30 &    8.8 &  42 &    1.6 &  42\\
4node-base       &  4096 & 25 &    2.3 &  38 &    5.3 &  72 & \textbf{   1.5} &  34 &    9.1 &  34 &    1.8 &  34\\
4node-base       &  8192 & 10 &    1.8 &  33 &    2.0 &  21 &    1.5 &  26 &    7.6 &  29 & \textbf{   1.5} &  29\\
4node-base       &  8192 & 20 &    4.4 &  39 &   16.3 & 133 &    3.9 &  40 &   18.1 &  39 & \textbf{   2.7} &  39\\
4node-base       &  8192 & 22 &    3.8 &  29 &    3.7 &  27 & \textbf{   2.6} &  25 &   19.7 &  36 &    2.8 &  36\\
4node-base       & 16384 & 10 &    4.4 &  38 &   10.1 &  57 &    3.6 &  30 &   19.3 &  39 & \textbf{   3.4} &  39\\
4node-base       & 16384 & 20 &   10.6 &  49 &   17.1 &  51 &    6.9 &  36 &   46.2 &  46 & \textbf{   5.6} &  45\\
4node-base       & 16384 & 25 &   13.5 &  53 &   26.2 &  74 &    8.0 &  37 &   63.4 &  53 & \textbf{   7.0} &  47\\
4node-base       & 32768 & 10 &   10.9 &  45 &   76.1 & 214 &   10.3 &  40 &   44.3 &  36 & \textbf{   6.0} &  36\\
4node-base       & 32768 & 20 &   27.8 &  68 &   80.1 & 125 &   25.9 &  72 &  119.3 &  59 & \textbf{  15.6} &  63\\
4node-base       & 32768 & 23 &   20.3 &  37 &   29.0 &  43 &   14.9 &  30 &  107.7 &  48 & \textbf{  12.9} &  50\\
assets           & 37500 &  6 &    1.6 &  21 & \textbf{   0.6} &  12 &    1.1 &  20 &    2.0 &  13 &    1.0 &  13\\
env              &  1200 &  6 &    0.0 &  21 & \textbf{   0.0} &  12 &    0.1 &  16 &    0.3 &  16 &    0.3 &  16\\
env              &  1875 &  6 &    0.1 &  22 & \textbf{   0.0} &  12 &    0.1 &  13 &    0.4 &  16 &    0.4 &  16\\
env              &  3780 &  6 &    0.1 &  25 & \textbf{   0.1} &  12 &    0.1 &  14 &    0.7 &  17 &    0.5 &  17\\
env              &  5292 &  6 &    0.2 &  27 & \textbf{   0.1} &  13 &    0.1 &  13 &    0.7 &  17 &    0.7 &  17\\
env              &  8232 &  6 &    0.3 &  26 & \textbf{   0.2} &  13 &    0.3 &  14 &    1.1 &  18 &    1.2 &  18\\
env              & 32928 &  6 &    1.7 &  26 & \textbf{   0.9} &  13 &    1.3 &  17 &    5.1 &  21 &    4.4 &  21\\
env-diss         &  1200 & 10 &    0.0 &  17 & \textbf{   0.0} &  19 &    0.0 &  16 &    0.4 &  22 &    0.4 &  22\\
env-diss         &  1200 & 13 &    0.1 &  15 & \textbf{   0.0} &  15 &    0.1 &  17 &    0.4 &  23 &    0.4 &  23\\
env-diss         &  1875 & 10 &    0.1 &  27 & \textbf{   0.1} &  17 &    0.1 &  20 &    0.6 &  22 &    0.5 &  22\\
env-diss         &  1875 & 15 &    0.1 &  17 & \textbf{   0.1} &  18 &    0.1 &  18 &    0.6 &  22 &    0.5 &  22\\
env-diss         &  3780 & 10 &    0.2 &  23 & \textbf{   0.1} &  16 &    0.1 &  17 &    0.8 &  21 &    0.7 &  21\\
env-diss         &  3780 & 15 &    0.2 &  20 & \textbf{   0.1} &  18 &    0.2 &  18 &    1.0 &  22 &    1.0 &  22\\
env-diss         &  5292 & 10 &    0.4 &  31 & \textbf{   0.2} &  25 &    0.2 &  21 &    1.0 &  25 &    1.3 &  26\\
env-diss         &  5292 & 15 &    0.3 &  22 & \textbf{   0.3} &  23 &    0.3 &  22 &    1.3 &  25 &    1.5 &  25\\
env-diss         &  8232 & 10 &    0.6 &  26 &    0.4 &  22 & \textbf{   0.4} &  18 &    1.7 &  22 &    1.8 &  22\\
env-diss         &  8232 & 15 &    1.0 &  31 &    0.6 &  29 & \textbf{   0.5} &  23 &    3.2 &  35 &    2.6 &  35\\
env-diss         & 32928 & 10 &    4.6 &  37 &    2.8 &  36 & \textbf{   1.9} &  17 &    8.0 &  27 &    7.2 &  27\\
env-diss         & 32928 & 14 &    4.8 &  28 & \textbf{   2.1} &  22 &    2.5 &  19 &   10.0 &  27 &    7.7 &  27\\
phone            & 32768 &  5 &    0.5 &  15 & \textbf{   0.4} &   8 &    0.6 &   8 &    1.9 &  10 &    0.7 &  10\\
stormG2          &  1000 & 10 &    0.7 &  35 &    0.5 &  21 & \textbf{   0.3} &  21 &    2.0 &  32 &    1.5 &  31\\
stormG2          &  1000 & 20 &    1.4 &  33 &    0.8 &  18 & \textbf{   0.5} &  19 &    4.5 &  29 &    1.7 &  29\\
stormG2          &  1000 & 21 &    1.6 &  37 &    0.8 &  18 & \textbf{   0.5} &  22 &    4.0 &  29 &    1.7 &  28\\
\end{longtable}
}
}

\end{document}